\newcommand{\rmi}{\mathrm{i}}
\newcommand{\ZZ}{\mathbb{Z}}
\newtheorem{theorem}{Theorem}
\newtheorem{lemma}[theorem]{Lemma}
\newtheorem{corollary}[theorem]{Corollary} 
\newtheorem{proposition}[theorem]{Proposition} 
\theoremstyle{definition}
\newtheorem{example}[theorem]{Example}
\newtheorem{remark}[theorem]{Remark} 
\newcommand{\osum}{\mathop{\oplus}}
\renewcommand{\Tilde}{\widetilde}
\newcommand{\ess}{\mathrm{ess}}
\newcommand{\cH}{\mathcal{H}}
\newcommand{\cL}{\mathcal{L}}
\newcommand{\cB}{\mathcal{B}}
\newcommand{\fh}{\mathfrak{h}}
\newcommand{\CC}{\mathbb{C}}
\newcommand{\RR}{\mathbb{R}}
\newcommand{\fn}{\mathfrak{n}}
\newcommand{\cD}{\mathcal{D}}
\DeclareMathOperator{\dom}{dom}
\DeclareMathOperator{\ran}{ran}
\DeclareMathOperator{\tr}{tr}
\begin{document}

\title{Self-adjoint indefinite Laplacians}

\author{Claudio Cacciapuoti}

\address{DiSAT, Sezione di Matematica, Universit\`a dell'Insubria,
via Valleggio 11, 22100 Como, Italy}
\email{claudio.cacciapuoti@uninsubria.it}

\author{Konstantin Pankrashkin}

\address{Laboratoire de Math\'ematiques d'Orsay, Univ.~Paris-Sud, CNRS, Universit\'e Paris-Saclay, 91405 Orsay, France}

\email{konstantin.pankrashkin@math.u-psud.fr}
\urladdr{http://www.math.u-psud.fr/~pankrash/}

\author{Andrea Posilicano}
\address{DiSAT, Sezione di Matematica, Universit\`a dell'Insubria,
via Valleggio 11, 22100 Como, Italy}
\email{andrea.posilicano@uninsubria.it}

\begin{abstract}
Let $\Omega_-$ and $\Omega_+$ be two bounded smooth  domains in $\mathbb{R}^n$, $n\ge 2$,
separated by a hypersurface $\Sigma$. For $\mu>0$, consider the function $h_\mu=1_{\Omega_-}-\mu 1_{\Omega_+}$.
We discuss self-adjoint realizations of the operator $L_{\mu}=-\nabla\cdot h_\mu \nabla$ in $L^2(\Omega_-\cup\Omega_+)$
with the Dirichlet  condition at the exterior boundary. We show that $L_\mu$
is always essentially self-adjoint on the natural domain (corresponding to transmission-type boundary conditions
at the interface $\Sigma$) and study some properties of its unique self-adjoint extension
$\mathcal{L}_\mu:=\overline{L_\mu}$. If $\mu\ne 1$, then $\mathcal{L}_\mu$ simply coincides with $L_\mu$ and has
compact resolvent. If $n=2$, then $\mathcal{L}_1$ has a non-empty essential spectrum, $\sigma_\mathrm{ess}(\mathcal{L}_{1})=\{0\}$.
If $n\ge 3$, the spectral properties of $\mathcal{L}_1$ depend on the geometry of $\Sigma$. In particular, it
has compact resolvent if $\Sigma$ is the union of disjoint strictly convex hypersurfaces,
but can have a non-empty essential spectrum if a part of $\Sigma$ is flat. 
Our construction features the method of boundary triplets, and the problem is reduced to finding
the self-adjoint extensions of a pseudodifferential operator on $\Sigma$.
We discuss some links between the resulting self-adjoint operator $\mathcal{L}_\mu$ and
some effects observed in negative-index materials.
\end{abstract}

\keywords{Self-adjoint extension,
sign-changing differential operator,
negative-index material,
Laplacian, boundary condition}

\subjclass[2010]{35J05, 47A10, 47B25, 47G30}

\maketitle

\section{Introduction}

Let $n\ge 2$ and $\Omega\subset\RR^{n}$ be an open bounded set
with a smooth boundary $\partial\Omega$. Let $\Omega_{-}$ be a subset of $\Omega$ having a smooth boundary $\Sigma$
(called \emph{interface}) and such that $\overline\Omega_{-}\subset \Omega$. In addition, we consider the open set
$\Omega_{+}:=\Omega\setminus\overline{\Omega_-}$, 
whose boundary is $\partial\Omega_+=\Sigma\cup{\partial\Omega}$, and
denote by $N_\pm$ the unit normal on $\Sigma$  exterior
with respect to $\Omega_\pm$. For $\mu>0$, consider the function
$h:\Omega\setminus\Sigma\to \RR$,
\[
h_\mu(x)=\begin{cases}
1, & x\in \Omega_-,\\
-\mu, & x\in \Omega_+.
\end{cases}
\]
The aim of the present work is to construct self-adjoint
operators in $L^2(\Omega)$ corresponding to the formally symmetric differential
expression $L_{\mu}=-\nabla\,\cdot h_\mu\nabla$.
The operators of such a type appear e.g. in the study of negative-index metamaterials, and we refer to the recent paper \cite{ng}
for a survey and an extensive bibliography; we remark that the parameter $\mu$ is usually called \emph{contrast}.
A possible approach  is to consider the sesquilinear form
\[
\ell_{\mu}(u,v)=\int_{\Omega} h_\mu\overline{\nabla u}\cdot \nabla v\, dx, \quad u,v\in H^1_0(\Omega),
\]
and then to define $L_{\mu}$ as the operator generated by $\ell_{\mu}$, 
in particular, for all functions $v$ from the domain of $L_{\mu}$ one should then have
\begin{equation}
 \label{lll}
\int_\Omega \overline{u}\,L_{\mu}v\, dx=\ell_{\mu}(u,v) \,,\quad u\in H^1_0(\Omega).
\end{equation}
But the form $\ell_{\mu}$ is not semibounded below, hence, the operator obtained
in this way can have exotic properties, in particular, its self-adjointness
is not guaranteed. We refer to \cite{kostr,ss1,ss2} for some available results
in this direction.

In \cite{bk} a self-adjoint operator for the above expression
was constructed for a very particular geometry when $\Omega_{-}=(-1,0)\times(0,1)$ and $\Omega_{+}=(0,1)\times(0,1)$,
which enjoys a separation of variables
and some symmetries. An interesting feature of the model is the possibility of a non-empty
essential spectrum although the domain is bounded.
Constructing self-adjoint operators realizations of $L_\mu$ for the general case
is an open problem, see \cite{open}. 
In the present note, we give a solution in the case of a smooth interface.

One should remark that the study of various boundary value problems
involving differential expressions $\nabla\cdot h \nabla$
with sign-changing $h$ has a long history, and the most
classical form involves unbounded domains with a suitable radiation
condition at infinity, cf. \cite{cost,grieser,ola}. In particular,
the geometric conditions appearing in the main results below are
very close to those of~\cite{ng-jmpa,ola} for the well-posedness of a transmission problem.
The case of a non-smooth interface $\Sigma$, which was partially studied in \cite{bbdr,bb1,bb2},
is not covered by our approach.

In fact, the problem of self-adjoint realizations
the non-critical case $\mu\ne 1$ was essentially settled in \cite{bbdr},
while for the critical case $\mu=1$ was only studied for
the above-mentioned example of~\cite{bk},
in \cite[Chapter 8]{ss2} for another similar situation
(symmetric $\Omega_-$ and $ \Omega_+$ separated by a finite portion
of a hyperplane), and in \cite{hus} for the one-dimensional case.
In a sense, in the present work we recast some techniques
of the transmission problems and the pseudodifferential operators
into the setting of self-adjoint extensions.
Using the machinery of boundary triplets we reduce the
problem first to finding self-adjoint extensions of a symmetric differential
operator and then to the analysis of the associated Weyl function acting
on the interface $\Sigma$.
Then one arrives at the study of the essential self-adjointness of a pseudodifferential
operator on $\Sigma$, whose properties depend on the dimension.
We hope that, in view of the recent progress in the theory of self-adjoint
extensions of partial differential operators, see e.g. \cite{b2,b1,gm},
such a direct reformulation could be  a starting point for a further advance
in the study of indefinite operators.

Similar to \cite{bk}, our approach is based on the theory of self-adjoint extensions.
Using the natural identification $L^2(\Omega)\simeq L^2(\Omega_-)\oplus
L^2(\Omega_+)$, $u\simeq (u_-,u_+)$, we introduce the  sets
\begin{multline*}       
\cD^s_{\mu}(\Omega\backslash\Sigma):=\Big\{
u=(u_-,u_+)\in H^s(\Omega_-)\oplus H^s(\Omega_+):
\Delta u_{\pm}\in L^{2}(\Omega_{\pm})\,,
\\
\quad 
u_-=u_+ \ \text{ and }\ N_-\!\cdot \!\nabla u_-=\mu N_+\!\cdot\! \nabla u_+\ \text{ on } \Sigma\,,\quad
u_+=0 \ \text{ on }\ \partial\Omega \Big\}\,,\quad s\ge 0\,.
\end{multline*}
Here and below, the values at the boundary are understood as suitable Sobolev traces; the exact definitions are given in Section~\ref{sec3}.
Let us recall that for $\frac12 < s<\frac32$ and  $u=(u_{-},u_{+})\in H^s(\Omega_-)\oplus H^s(\Omega_+)$
the conditions $u_-=u_+$ on $\Sigma$ and $u_+=0$ on $\partial\Omega$ entail $u\in H^{s}_{0}(\Omega)$,
see e.g. \cite[Theorem 3.5.1]{Agra}. In particular,
\begin{equation}
 \label{cdd1}
\cD^2_{\mu}(\Omega\backslash\Sigma)\subseteq H^{\frac32-\varepsilon}_{0}(\Omega) \text{ for  $\varepsilon>0$},
\qquad
\cD^1_\mu(\Omega\backslash\Sigma)\subseteq H^{1}_{0}(\Omega).
\end{equation}

Consider the operator
\begin{equation}
   \label{opl}
L_{\mu} (u_-,u_+)=(-\Delta u_-, \mu \Delta u_+),
\text{ with }
\Delta=\dfrac{\partial^2}{\partial x_1^2}+\dots+\dfrac{\partial^2}{\partial x_n^2},
\end{equation}
acting on the domain 
\begin{equation}\label{doml}
\dom L_{\mu}=\cD^{2}_{\mu}(\Omega\backslash\Sigma)\,.
\end{equation} 
Remark that $L_{\mu}$ satisfies \eqref{lll} and it is a densely defined symmetric operator
in $L^2(\Omega)$. Therefore, we use $L_{\mu}$ as a starting point and seek its self-adjoint extensions.
Even if the case $\mu\ne 1$ was studied earlier, we include it into consideration
as it does not imply any additional difficulties.

\begin{theorem}[Self-adjointness]\label{thm1}
The operator $L_\mu$ is essentially self-adjoint, and we denote
\[
\cL_\mu:=\overline{L_\mu}
\]
its closure and unique self-adjoint extension.
Furthermore, if $\mu\ne 1$, then $\cL_\mu =L_\mu$, i.e. $L_\mu$ itself is self-adjoint, 
and has compact resolvent.
\end{theorem}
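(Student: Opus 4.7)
The plan is to apply the boundary triplet framework to reduce the self-adjointness of $L_\mu$ to a question about a pseudodifferential operator on the interface $\Sigma$. I first introduce the minimal symmetric operator $S$ acting as $(u_-,u_+)\mapsto(-\Delta u_-,\mu\Delta u_+)$ on the dense subspace of $H^2(\Omega_-)\oplus H^2(\Omega_+)$ consisting of pairs whose Dirichlet \emph{and} Neumann traces on $\Sigma$ both vanish and whose $+$-component vanishes on $\partial\Omega$. Its adjoint $S^*$ acts by the same expression on the maximal domain (where $\Delta u_\pm\in L^2$), retaining only the exterior Dirichlet condition. A two-sided trace construction yields a boundary triplet $(\fh,\Gamma_0,\Gamma_1)$ for $S^*$ whose associated Weyl function decouples into the Dirichlet-to-Neumann maps $\Lambda_-(\lambda)$ on $\Omega_-$ and $\widetilde\Lambda_+(\lambda)$ on $\Omega_+$, the latter being the Dirichlet-to-Neumann map for the rescaled equation $\mu\Delta u_+=\lambda u_+$.

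Using $N_-=-N_+$ on $\Sigma$, the two transmission conditions defining $\dom L_\mu$, namely $u_-=u_+=:\varphi$ and $N_-\!\cdot\!\nabla u_-=\mu N_+\!\cdot\!\nabla u_+$, collapse to the single interface equation
\[
T_\mu(\lambda)\varphi := \bigl(\Lambda_-(\lambda)-\mu\widetilde\Lambda_+(\lambda)\bigr)\varphi=0,
\]
and a Krein-type resolvent formula identifies the self-adjointness, essential self-adjointness and compact-resolvent properties of $L_\mu$ with the corresponding properties of $T_\mu(\lambda)$ on a suitable Sobolev scale over $\Sigma$, using additionally that the Dirichlet resolvents on the bounded sets $\Omega_\pm$ are compact.

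In the non-critical case $\mu\neq 1$, both $\Lambda_-(\lambda)$ and $\widetilde\Lambda_+(\lambda)$ are classical pseudodifferential operators of order $1$ on the closed smooth hypersurface $\Sigma$ with principal symbol $|\xi|$, so $T_\mu(\lambda)$ is an \emph{elliptic} pseudodifferential operator of order $1$ with principal symbol $(1-\mu)|\xi|\neq 0$. Standard elliptic theory on the closed manifold $\Sigma$ then identifies its natural domain with $H^1(\Sigma)$ and renders it invertible with compact inverse on the appropriate Sobolev scale; via the Krein formula this yields that $L_\mu$ itself is already closed and self-adjoint on $\cD^2_\mu$, with compact resolvent — the domain description $\cD^2_\mu$ following from elliptic regularity of $\Delta$ in each $\Omega_\pm$.

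In the critical case $\mu=1$ the principal symbols of $\Lambda_-$ and $\widetilde\Lambda_+$ cancel, so $T_1(\lambda)$ becomes a pseudodifferential operator on $\Sigma$ of order $\leq 0$, hence bounded and symmetric on $L^2(\Sigma)$ and therefore self-adjoint there — but no longer invertible with compact inverse. Via the boundary triplet correspondence the corresponding boundary relation is still closable with self-adjoint closure, which yields essential self-adjointness of $L_1$ on $\cD^2_1$, though $\dom \cL_1$ is then strictly larger than $\cD^2_1$ and the resolvent need not be compact. The main obstacle in executing this program is the careful set-up of the two-sided boundary triplet for the indefinite operator $S^*$ and the verification that the abstract self-adjoint extension produced by $T_\mu(\lambda)\varphi=0$ actually coincides with the closure of $L_\mu$ on $\cD^2_\mu$; this identification relies on the regularity statements~\eqref{cdd1} together with elliptic regularity for $\Delta$ in each of $\Omega_\pm$, and is where the smoothness of $\Sigma$ enters essentially.
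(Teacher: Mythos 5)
Your overall strategy — a boundary triplet for the two‑sided transmission problem, identification of the interface operator with a linear combination of Dirichlet‑to‑Neumann maps, and a case split on whether the principal symbols cancel at $\mu=1$ — is indeed the approach of the paper, and your treatment of the non‑critical case $\mu\ne 1$ is essentially correct (the paper instead deduces compactness of the resolvent directly from the Sobolev embedding $\dom L_\mu\subseteq H^1_0(\Omega)\hookrightarrow\hookrightarrow L^2(\Omega)$, which is a touch more elementary than passing through the Krein formula, but either works).

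There is, however, a genuine gap in your argument for the critical case. You observe that when $\mu=1$ the principal symbols of $\Lambda_-$ and $\widetilde\Lambda_+$ cancel, so the DtN difference $T_1=D^-_0-\widetilde D^+_0$ has order $\le 0$, is bounded and symmetric on $L^2(\Sigma)$, and is therefore self‑adjoint there, and you conclude essential self‑adjointness of $L_1$ ``via the boundary triplet correspondence''. But the operator that must be shown essentially self‑adjoint in the boundary triplet framework is not $T_1$ acting in $L^2(\Sigma)$. In any ordinary boundary triplet adapted to the maximal Laplacians on $\Omega_\pm$ the boundary Hilbert space carries a Sobolev weight; in the paper's construction $\fh$ contains $H^{1/2}(\Sigma)$ and the parameter operator is $\Theta_1=\tfrac12(D^-_0-\widetilde D^+_0)\Lambda$ there, unitarily equivalent to $\Psi_1=\tfrac12\Lambda^{1/2}(D^-_0-\widetilde D^+_0)\Lambda^{1/2}$ restricted to $H^2(\Sigma)$ in $L^2(\Sigma)$. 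This insertion of $\Lambda$ raises the order by one: when $n\ge 3$ the difference $D^-_0-\widetilde D^+_0$ has order exactly $0$ (with principal symbol $2b_0(x,\xi)$, which does \emph{not} vanish), so $\Psi_1$ has order $1$ with principal symbol $b_0(x,\xi)|\xi|$, which is in general non‑elliptic. Boundedness fails, so ``symmetric bounded, hence self‑adjoint'' is not available, and essential self‑adjointness of the restriction to $H^2(\Sigma)$ is no longer automatic. The paper fills precisely this hole with Lemma~\ref{ess}: every symmetric first‑order pseudodifferential operator on a closed manifold is essentially self‑adjoint on $C^\infty(\Sigma)$, without any ellipticity assumption, via a Friedrichs‑mollifier argument quoted from Taylor. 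You invoke no analogue of this, and without it your proof of essential self‑adjointness at $\mu=1$, $n\ge 3$ does not close. (In $n=2$ your reasoning does apply, as the DtN difference and its conjugate are both smoothing.) If you instead intended a quasi‑boundary‑triple setup with $\fh=L^2(\Sigma)$ so that the Weyl function is literally the DtN map, then you must verify that the abstract correspondence between self‑adjoint extensions and self‑adjoint boundary operators still holds in that weaker framework, which is nontrivial and not addressed in your outline.
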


Now we consider in greater detail the critical case $\mu=1$. The properties
of $\cL_1$ appear to depend on the dimension.
In two dimensions, we have a complete result:

\begin{theorem}[Critical contrast in two dimensions]\label{thm2}
Let $\mu=1$ and $n=2$, then
\begin{equation}
      \label{dom2}
\dom \cL_1=\cD^{0}_{1}(\Omega\backslash\Sigma),
\quad
\cL_1(u_-,u_+)=(-\Delta u_-,\Delta u_+),
\end{equation}
and the essential spectrum is non-empty, $\sigma_\ess(\cL_1)=\{0\}$.
\end{theorem}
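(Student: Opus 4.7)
The plan is to push through the boundary-triplet reduction outlined in the introduction, translating the problem into the analysis of a pseudodifferential operator on the interface $\Sigma$. Let $\Lambda_\pm(z):H^{1/2}(\Sigma)\to H^{-1/2}(\Sigma)$ denote the Dirichlet-to-Neumann maps for $-\Delta-z$ on $\Omega_\pm$ (with a Dirichlet condition on $\partial\Omega$ for the outer component), both referred to the common outward normal $N_-$ of $\Omega_-$. They are classical elliptic pseudodifferential operators of order~$1$ with common principal symbol $|\xi|$. After subtracting a Dirichlet-zero $H^2$-piece carrying the correct interior data, the transmission conditions defining $\cD^0_1$ reduce to a single scalar equation
\begin{equation*}
T(z)\,\varphi=g\quad\text{on }\Sigma,\qquad T(z):=\Lambda_-(z)-\Lambda_+(z),
\end{equation*}
for the common trace $\varphi$. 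The critical choice $\mu=1$ is exactly what causes the principal symbols of $\Lambda_\pm(z)$ to cancel, so $T(z)$ has order $\le 0$; in dimension $n=2$, $\Sigma$ is a compact $1$-manifold and a symbolic computation shows that $T(z)$ is in fact of strictly negative order, hence compact on $L^2(\Sigma)$.

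To establish $\dom\cL_1=\cD^0_1$, for $u\in\cD^0_1$ I would use the decomposition $u_\pm=u^{\mathrm{part}}_\pm+u^{\mathrm{harm}}_\pm$ with $u^{\mathrm{part}}_\pm\in H^2(\Omega_\pm)\cap H^1_0$ solving the inhomogeneous Dirichlet problem with interior data $\mp\Delta u_\pm$, and $u^{\mathrm{harm}}_\pm$ the harmonic extension of the common trace $\varphi$ (which a priori lives in a negative Sobolev space). The transmission relations become $T(0)\varphi=g$ for an explicit $g$. Mollifying $\varphi$ into smoother data $\varphi_k$ with $T(0)\varphi_k\to g$ produces approximants $u^{(k)}\in\cD^2_1$ converging to $u$ in the graph norm of $L_1$, giving $\cD^0_1\subseteq\dom\cL_1$. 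The reverse inclusion follows from the maximality of $\cD^0_1$ with respect to the formal expression.

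For the essential spectrum I would prove the two inclusions separately. To show $0\in\sigma_\ess(\cL_1)$, I would construct a Weyl sequence: take highly oscillating boundary modes $\varphi_n$ on $\Sigma$ approximating eigenfunctions of $T(0)$ for its eigenvalues accumulating at $0$, extend harmonically to both $\Omega_\pm$, normalize in $L^2(\Omega)$, and correct by a small term enforcing the transmission conditions. The principal-symbol cancellation in $T(0)$ keeps the correction small, so $\|\cL_1 u_n\|\to 0$ while $\|u_n\|=1$ and $u_n\rightharpoonup 0$. For $\sigma_\ess(\cL_1)\subseteq\{0\}$, I would use the Krein-type resolvent formula
\begin{equation*}
(\cL_1-z)^{-1}-(\cL_\mathrm{D}-z)^{-1}=\gamma(z)\,T(z)^{-1}\gamma(\bar z)^*,
\end{equation*}
where $\cL_\mathrm{D}=(-\Delta^D_{\Omega_-})\oplus(\Delta^D_{\Omega_+})$ has compact resolvent (by Theorem~\ref{thm1} applied to each direct summand) and $\gamma(z)$ is the associated Poisson operator. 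For $\lambda\in\RR\setminus\{0\}$ in the resolvent set of $\cL_1$, the invertibility of $T(\lambda)$ on the appropriate Sobolev scale on $\Sigma$, combined with the smoothing action of $\gamma(\lambda)$, makes the right-hand side compact on $L^2(\Omega)$, so $\lambda\notin\sigma_\ess(\cL_1)$.

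The main obstacle is the strictly negative-order claim for $T(z)$ in $n=2$: the principal-symbol cancellation yields only order $\le 0$, while compactness on $L^2(\Sigma)$ requires the matching of subprincipal symbols of $\Lambda_\pm(z)$. This matching is special to a one-dimensional interface, where there are no transverse curvature contributions to obstruct it; in $n\ge 3$ the analogous cancellation fails generically, which is precisely why $\sigma_\ess(\cL_1)$ becomes geometry-dependent in higher dimensions.
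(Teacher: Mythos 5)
Your overall strategy follows the paper's: reduce to a boundary pseudodifferential operator $T(z)$ on $\Sigma$, observe that at critical contrast the principal symbols of the two Dirichlet-to-Neumann maps cancel, and then distinguish dimension $n=2$. There are, however, two significant problems in the proposal.

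First, the claim that a symbolic computation shows $T(z)$ has strictly negative order in dimension two, which you yourself flag as the main obstacle, is not merely sketched but also understates what is available. The paper invokes the classical result of Edward (cited as \cite{edw}, see also \cite{uhl}) that the \emph{full} symbol of the Dirichlet-to-Neumann map on a smooth bounded planar domain, written with respect to the arclength parametrization, equals $|\xi|$. Hence $D^-_0-\widetilde D^+_0$, and so the rescaled operator $\Psi_1=\tfrac12\Lambda^{1/2}(D^-_0-\widetilde D^+_0)\Lambda^{1/2}$, is not just of negative order but of order $-\infty$, i.e. smoothing. This immediately yields that $\Theta$ is a \emph{bounded, compact, self-adjoint} operator on the whole of $H^{1/2}(\Sigma)$, which is precisely what gives the clean domain formula $\dom\cL_1=\cD^0_1(\Omega\backslash\Sigma)$ (the boundary condition $\Gamma_1u\in U^*\dom\Theta$ becomes vacuous because $\dom\Theta=H^{1/2}(\Sigma)$), and gives $0\in\sigma_\ess(\cL_1)$ directly through Corollary~\ref{corol5}(b) because $0\in\sigma_\ess(\Theta)$ for any compact self-adjoint operator on an infinite-dimensional space. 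Your mollification scheme for the domain and your hand-built Weyl sequence are unnecessary detours once the full-symbol fact is in hand.

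Second, and more seriously, your argument for $\sigma_\ess(\cL_1)\subseteq\{0\}$ cannot work. You propose to show that the Krein resolvent difference
$\gamma(z)\,T(z)^{-1}\gamma(\bar z)^*$ is compact on $L^2(\Omega)$. If that were true for a single $z$ in the common resolvent set, Weyl's theorem would give $\sigma_\ess(\cL_1)=\sigma_\ess(\cL_\mathrm{D})=\emptyset$, contradicting $0\in\sigma_\ess(\cL_1)$. The point is that the $\gamma$-field here is \emph{not} smoothing: in the parametrization of Section~\ref{sec3} it maps $H^{1/2}(\Sigma)$ into $L^2(\Omega)$ via $\varphi\mapsto P^\pm_z\Lambda\varphi$, and $\Lambda$ lowers the Sobolev index by one so no compactness is gained. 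The paper's route around this is the meromorphic Fredholm alternative: one computes that $\Theta-UM^\Pi_zU^*=\tfrac z2+K_z$ with $K_z$ compact and holomorphic on $\rho(A)$ with at most finite-rank poles at $\sigma(A)$, observes that self-adjointness of $\cL_1$ excludes the degenerate alternative, and concludes that $0\in\sigma(\Theta-M^\Pi_z)$ only on a discrete subset of $\CC\setminus\{0\}$, which by Corollary~\ref{corol5} forces all nonzero spectrum of $\cL_1$ to consist of isolated eigenvalues of finite multiplicity. You will need to replace the compactness argument by something of this Fredholm-analytic type.
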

Remark (see Proposition~\ref{thm4} below) that $0$ is not necessarily an eigenvalue of $\cL_1$,
contrary to the preceding examples given in \cite{bk} and \cite[Chapter 8]{ss2}
for which the essential spectrum consisted of an infinitely degenerate zero eigenvalue.

In dimensions $n\ge 3$ the result appears to depend on the geometric properties of $\Sigma$: 

\begin{theorem}[Critical contrast in dimensions $\ge 3$]\label{thm3}
Let $\mu=1$ and $n\ge 3$, then $\cL_1$ acts as
$\cL_1(u_-,u_+)=(-\Delta u_-,\Delta u_+)$, and its domain satisfies
\begin{equation}
         \label{dom3}
\cD^{1}_{1}(\Omega\backslash\Sigma)\subseteq\dom \cL_1.
\end{equation}
Furthermore,
\begin{itemize}
\item[(a)] If on each connected component of $\Sigma$
the principal curvatures are either all strictly positive or all
strictly negative (in particular, if each maximal connected component
of $\Sigma$ is strictly convex), then 
\begin{equation}
         \label{dom4}
\dom\cL_1=\cD^{1}_{1}(\Omega\backslash\Sigma)
\end{equation}
and $\cL_1$ has compact resolvent.
\item[(b)] 
If a subset of $\Sigma$ is isometric to a non-empty open subset of $\RR^{n-1}$,
then 
\begin{equation}
         \label{dom5}
\dom\cL_1\not=\cD^{s}_{1}(\Omega\backslash\Sigma)\quad \text{for any $s>0$,}
\end{equation}
the essential spectrum of $\cL_1$
is non-empty, and $\{0\}\subseteq \sigma_\ess(\cL_1)$.
\end{itemize}
\end{theorem}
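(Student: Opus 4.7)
\emph{Strategy.} The approach is the boundary-triplet reduction that was already the main tool in the proof of Theorem~\ref{thm1}. For a natural boundary triplet on $L_1$, the self-adjoint realization $\cL_1$ is encoded by a Weyl function $M(z)$ acting on function spaces over $\Sigma$, and a Krein-type resolvent formula expresses $(\cL_1-z)^{-1}$ in terms of $M(z)$ and of an associated Poisson-type operator $\gamma(z)$. In the critical case $\mu=1$ one has, modulo smoothing, $M(z)=\Lambda_-(z)-\Lambda_+(z)$, where $\Lambda_\pm(z)$ is the Dirichlet-to-Neumann operator on $\Sigma$ associated with $-\Delta-z$ on $\Omega_\pm$, returning $N_\pm\!\cdot\!\nabla u_\pm|_\Sigma$. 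Both $\Lambda_\pm(z)$ are classical pseudodifferential operators of order $1$ with the common principal symbol $|\xi|_g$ induced by the metric of $\Sigma$, so their difference has vanishing principal part. The next-order (order $0$) contributions involve the second fundamental form of $\Sigma$; because the orientations $N_\pm$ are opposite, the two contributions add up in the difference to a nonzero multiple of the sum of the principal curvatures of $\Sigma$. Thus $M(z)$ is a self-adjoint classical pseudodifferential operator of order $0$ on $L^2(\Sigma)$, whose principal symbol is (up to a nonvanishing constant) the mean curvature of $\Sigma$.

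\emph{Easy inclusion and part~(a).} The inclusion $\cD^1_1(\Omega\setminus\Sigma)\subseteq\dom\cL_1$ together with the action $\cL_1(u_-,u_+)=(-\Delta u_-,\Delta u_+)$ follows by integration by parts: for $u\in\cD^1_1$ and $v\in\cD^2_1=\dom L_1$, the boundary terms on $\Sigma$ cancel by virtue of the transmission condition, so $u\in\dom L_1^*=\dom\cL_1$ by essential self-adjointness (Theorem~\ref{thm1}). For part~(a), the curvature hypothesis ensures that the principal symbol of $M(z)$ is nowhere zero, so $M(z)$ is an elliptic self-adjoint pseudodifferential operator of order $0$ on $L^2(\Sigma)$, invertible modulo a finite-dimensional kernel. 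The Krein-type formula then exhibits $(\cL_1-z)^{-1}$ as a compact perturbation of the decoupled Dirichlet resolvent on $\Omega_-\cup\Omega_+$, yielding compact resolvent. The sharp identification $\dom\cL_1=\cD^1_1$ is a regularity statement: for $u\in\dom\cL_1$ the common normal trace on $\Sigma$, \emph{a priori} only in $H^{-1/2}(\Sigma)$, is upgraded by $M(z)^{-1}$ to $H^{1/2}(\Sigma)$, which is exactly the trace regularity of $H^1(\Omega_\pm)$-functions; this promotes $u_\pm$ to $H^1(\Omega_\pm)$.

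\emph{Part~(b).} I would construct a Weyl sequence at $\lambda=0$ supported in a tubular neighbourhood of the flat piece $U\subset\Sigma$. Choosing local coordinates $(x',x_n)\in U\times(-\varepsilon,\varepsilon)$ with $\Sigma=\{x_n=0\}$ and $\Omega_\pm=\{\pm x_n>0\}$, the plane waves
\[
u_{k,\omega}(x',x_n)=e^{ik\omega\cdot x'}\,e^{-k|x_n|},\qquad \omega\in\RR^{n-1},\ |\omega|=1,
\]
are harmonic on each side and fulfill the transmission condition exactly, since a direct computation gives $N_-\!\cdot\!\nabla u_{k,\omega}|_{x_n=0^-}=N_+\!\cdot\!\nabla u_{k,\omega}|_{x_n=0^+}=k\,e^{ik\omega\cdot x'}$. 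Superposing against a fixed Schwartz profile in $\omega$ and applying smooth cutoffs in $x'$ (inside $U$) and in $x_n$ produces a wave packet $u_k\in\cD^2_1$ with $\|u_k\|\sim k^{-1/2}$; the image $\cL_1 u_k$ receives contributions only from the cutoff regions, where the ansatz decays like $e^{-k\delta}$, and is therefore exponentially small in $k$. The normalised sequence $v_k=u_k/\|u_k\|$ converges weakly to $0$ and satisfies $\cL_1 v_k\to 0$, proving $0\in\sigma_\ess(\cL_1)$. For \eqref{dom5}, one superposes the $u_{k,\omega}$ with coefficients $a_k$ tuned so that the boundary trace on $\Sigma$ lies in $L^2(\Sigma)$ but not in $H^s(\Sigma)$ for the prescribed $s>0$; the resulting element lies in $\dom\cL_1$ (convergence in graph norm) but not in $\cD^s_1(\Omega\setminus\Sigma)$, giving $\dom\cL_1\ne\cD^s_1$ for each $s>0$.

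\emph{Main obstacle.} The technical heart of the argument is the symbolic calculation showing that $\Lambda_-(z)-\Lambda_+(z)$ is a pseudodifferential operator of order $0$ on $\Sigma$ whose principal symbol is a nonzero multiple of the mean curvature: one needs the cancellation of the order-$1$ principal symbols and the identification of the next-order contribution, for which careful bookkeeping in the pseudodifferential parametrix of $-\Delta-z$ is required. A second delicate point in (a) is converting the ellipticity of $M(z)$ into the sharp regularity $\dom\cL_1=\cD^1_1$, which requires precise mapping properties of $\gamma(z)$ paired with $M(z)^{-1}$.
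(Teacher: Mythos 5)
Your overall strategy — reduce via a boundary triplet to a pseudodifferential operator on $\Sigma$ obtained from the difference of Dirichlet-to-Neumann maps, exploit cancellation of the leading symbol, and then read ellipticity/non-ellipticity from the subprincipal term — is exactly the paper's approach, and your inclusion $\cD^1_1\subseteq\dom\cL_1$ by Green's identity is a valid alternative to the paper's argument (they deduce it from $H^{3/2}(\Sigma)\subseteq\dom\Theta$). However, two steps in your proposal contain genuine errors.

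\textbf{The claimed symbol is wrong.} You assert that, after cancellation of the order-$1$ symbols, the principal symbol of $D^-_0-\widetilde D^+_0$ is a nonzero multiple of the \emph{mean curvature}. The correct expression, from Taylor's expansion $D^\pm_0=(-\Delta_\Sigma)^{1/2}+B^\pm+\text{(order $-1$)}$, is the $\xi$-dependent quantity
\[
b_0(x,\xi)=\frac{1}{2}\Big(\tr W_x-\frac{\langle\xi,W^*_x\xi\rangle}{\langle\xi,\xi\rangle}\Big),
\]
with $W_x$ the Weingarten map, and $D^-_0-\widetilde D^+_0$ has principal symbol $2b_0(x,\xi)$. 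This is \emph{not} the mean curvature: the second term subtracts the normal curvature in the direction $\xi$, so $b_0$ ranges over $\tfrac12\big(\sum_j k_j-k_\ell\big)$ as $\xi$ runs over the cosphere. For $n=3$ it lies between $\tfrac12\min(k_1,k_2)$ and $\tfrac12\max(k_1,k_2)$. If you were right that the symbol is $\tfrac12(k_1+k_2)$, the hypothesis in part (a) could be weakened to ``mean curvature nonvanishing'' — but a surface with $k_1>0>k_2$ and $k_1+k_2\neq 0$ has $b_0(x,\xi)=0$ for suitable $\xi$, so $\Psi_1$ is not elliptic there. Thus your lemma about the symbol is false, although the conclusion you draw in (a) is still true because under the actual hypothesis (all $k_j>0$ or all $k_j<0$ on each component) one does get $|b_0(x,\xi)|$ bounded below, hence ellipticity. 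You need to replace the ``mean curvature'' claim by the correct $\xi$-dependent symbol and check the bound $a_1\le|b_0(x,\xi)|\le a_2$.

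\textbf{The Weyl sequence for (b) fails.} You claim the image $\cL_1 u_k$ of your wave packet ``receives contributions only from the cutoff regions, where the ansatz decays like $e^{-k\delta}$.'' This is true for the cutoff in the normal variable $x_n$, but \emph{not} for the tangential cutoff $\chi(x')$: at a fixed distance $\rho>0$ in $x'$ from the center of the packet, the envelope is $O(1)$ (or at best $O(\rho^{-N})$ for a fixed $N$ determined by the Schwartz profile), uniformly in $k$. Since $|\nabla u_k|\sim k\,|u_k|$, one gets $\|\Delta(\chi u_k)\|_{L^2}\gtrsim k^{1/2}$ against $\|u_k\|_{L^2}\sim k^{-1/2}$, so the ratio $\|\cL_1 v_k\|/\|v_k\|$ grows like $k$ rather than tending to $0$. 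The direct PDE-level singular sequence does not close. The paper avoids this by constructing the Weyl sequence at the \emph{boundary-operator} level: on the flat piece the full symbols of $D^-_0$ and $\widetilde D^+_0$ agree, so $\Psi_1$ restricted there is smoothing, hence compact, and any orthonormal sequence of compactly supported smooth functions on the flat piece gives $\Psi_1\widetilde u_j\to 0$ strongly (along a subsequence), proving $0\in\sigma_\ess(\overline{\Phi_1})$; by Corollary~\ref{corol5}(b) this transfers to $0\in\sigma_\ess(\cL_1)$. Finally, you do not need the separate superposition argument for \eqref{dom5}: once $0\in\sigma_\ess(\cL_1)$ is established, $\dom\cL_1\subseteq\cD^s_1(\Omega\setminus\Sigma)\subseteq H^s(\Omega_-)\oplus H^s(\Omega_+)$ for $s>0$ would force compact resolvent and hence empty essential spectrum, a contradiction — which is the paper's one-line conclusion.
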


The proofs of the three theorems are given in Sections~\ref{sec2}--\ref{sec4}.
In section~\ref{sec2} we recall the tools of the machinery of boundary triplets
for self-adjoint extensions of symmetric operators. In section~\ref{sec3}
we apply these tools to the operators $L_\mu$ and reduce the initial
problem to finding self-adjoint extensions of a pseudodifferential
operator $\Theta_\mu$ acting on $\Sigma$, which is essentially a linear combination
of (suitably defined) Dirichlet-to-Neumann maps on $\Omega_\pm$.
The self-adjoint extensions of $\Theta_\mu$ are studied in Section~\ref{sec4}
using a combination of some facts about Dirichlet-to-Neumann maps 
and pseudodifferential operators.

In addition, we use the definition of the operators $\cL_\mu$ to revisit
some results concerning the so-called cloaking by negative
materials, see e.g \cite{ng,cloaking}. For $\delta>0$, consider
the operator $T_{\mu,\delta}$ generated by the regularized sesqulinear form
\[
t_{\mu,\delta}(u,v):=\int_{\Omega\setminus\Sigma} \overline{\nabla u}\cdot (h_\mu+i\delta) \nabla v\, dx,
\quad u,v\in H^1_0(\Omega).
\]
By the Lax-Milgram theorem, the operator $T_{\mu,\delta}:L^2(\Omega)\supset H^1_0(\Omega)\supset\dom T_{\mu,\delta}\to L^2(\Omega)$ has
a bounded inverse, hence, for $g\in L^2(\Omega)$ one may define $u_\delta:=(T_{\mu,\delta})^{-1}g\in H^1_0(\Omega)$.
It was observed in \cite{cloaking} that the limit properties of $u_\delta$ as $\delta$ tends to $0$
can be quite irregular, in particular, the norm $\|u_\delta\|_{H^1(V)}$ may remain bounded
for some subset $V\subset\Omega$ while $\|u_\delta\|_{H^1(\Omega\setminus V)}$ goes to infinity. 
The most prominent example is as follows: for $0<r<R$ we denote
\begin{gather*}
B_r:=\big\{
x\in \RR^n: |x|<r
\big\},
\quad
B_{r,R}:=\big\{
x \in \RR^n: r<|x|<R
\big\},\quad
S_r:=\big\{
x\in  \RR^n: |x|=r
\big\},
\end{gather*}
pick three constants $0<r_i<r_e<R$ and consider the above operator $T_{\mu,\delta}$
corresponding to 
\begin{equation}
   \label{omb}
\Omega:=B_R, \quad \Omega_-:=B_{r_i,r_e},
\end{equation}
and set $u_\delta:=(T_{\mu,\delta})^{-1}g$ with $g$ supported in $B_{r_e,R}$.
Then the norm $\|u_\delta\|_{H^1(\Omega)}$ remains bounded for $\delta$ approaching $0$
provided $\mu\ne 1$ or $n\ge 3$. For $\mu=1$ and $n=2$ the situation appears to be different:
if $g$ is supported outside the ball $B_a$ with $a=r_e^2/r_i$, then $\|u_\delta\|_{H^1(B_R)}$
remains bounded, otherwise, for a generic $g$, the norm $\|u_\delta\|_{H^1(B_{r_e,R})}$
is bounded, while $\|u_\delta\|_{H^1(B_{r_i,r_e})}$ becomes infinite, see \cite{cloaking}.
Such a non-uniform blow-up the $H^1$ norm is often referred
to as an anomalously localized resonance, and we refer to \cite{amm1,amm2,bouch,kohn,ng2}
for a discussion of other similar models and generalizations.

It is interesting to understand whether similar observations can be made based
on the direct study of the operator $\cL_\mu$.
In fact, instead of taking the limit of $u_\delta$ one may study directly the solutions
 $u$ of $\cL_\mu u=g$.
If $\mu\ne 1$, then $u\in H^1_0(\Omega)$ by Theorem~\ref{thm1}.
Furthemore, due to Theorem~\ref{thm3}(b) the same holds
for $\mu=1$ and $n\ge 3$ as  the interface $\Sigma$ consists of two strictly convex
hypersurfaces (the spheres $S_{r_i}$ and $S_{r_e}$), which is quite close to the discussion
of~\cite{kett}; we remark that a separation of variables
shows that $\cL_1$ is injective and thus surjective in this case.
The study of the case $\mu=1$ and $n=2$ is more involved, and the link to
the anomalously localized resonance appears as follows (we assume
a special form of the function $g$ to make the discussion more transparent):
\begin{proposition}\label{thm4}
Let $\mu=1$ and $n=2$, then the operator $\cL_1$  associated with \eqref{omb} is injective,
and a function $g\in L^2(B_R)$ of the form
\begin{equation}
     \label{eq-gab}
g(r\cos\theta,r\sin\theta)=1_{(a,b)}(r) h(\theta), \quad h\in L^2(0,2\pi),
\quad
r_e\le a<b\le R,
\end{equation}
belongs to $\ran \cL_1$ if and only if
\begin{equation}
      \label{eq-hm}
\sum_{m\in \ZZ\setminus\{0\}} \dfrac{|h_m|^2}{|m|^5} \Big(\dfrac{r_e^2}{ r_i a}\Big)^{2|m|}<\infty
\text{ with }
h_m:=\dfrac{1}{2\pi}\int_0^{2\pi} h(\theta)e^{-\rmi m \theta}d\theta.
\end{equation}
In particular, the condition \eqref{eq-hm} is satisfied for any $h$ if $a\ge r_e^2/r_i$,
but fails generically for $a<r_e^2/r_i$.
\end{proposition}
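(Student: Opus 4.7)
My plan is to exploit the rotation invariance of $\cL_1$ to reduce the problem to a family of one-dimensional radial problems indexed by the Fourier mode $m\in\ZZ$. Writing $u(r\cos\theta,r\sin\theta)=\sum_m U_m(r)e^{\rmi m\theta}$ and similarly for $g$, so that its $m$-th Fourier coefficient is $G_m(r)=h_m 1_{(a,b)}(r)$, the equation $\cL_1 u=g$ decouples mode by mode: the Fourier projections $\Pi_m$, obtained by averaging rotations, commute with the rotation-invariant $\cL_1$ and therefore preserve $\dom\cL_1$. On each mode $U_m$ must satisfy
\[
U_m''(r)+\tfrac{1}{r}U_m'(r)-\tfrac{m^2}{r^2}U_m(r)=\pm G_m(r)
\]
piecewise on $(0,r_i)$, $(r_i,r_e)$, $(r_e,R)$ (with the appropriate sign depending on whether one is in $\Omega_-$ or $\Omega_+$), together with $U_m(R)=0$ and the critical transmission condition at $r\in\{r_i,r_e\}$ which, since $\mu=1$ and $N_-=-N_+$ on $\Sigma$, requires that the two one-sided radial derivatives sum to zero. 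The constraint $\Delta u_\pm\in L^2$ inside $B_{r_i}$ excludes the $r^{-|m|}$ branch when $m\ne 0$ and the $\ln r$ branch when $m=0$, so I may write $U_m=A_m r^{|m|}$ on $(0,r_i)$, $U_m=B_m r^{|m|}+C_m r^{-|m|}$ on $(r_i,r_e)$, and $U_m=D_m r^{|m|}+E_m r^{-|m|}+P_m(r)$ on $(r_e,R)$, where $P_m$ is a particular solution built by variation of parameters and chosen so that $P_m=P_m'=0$ on $(r_e,a)$.

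The decisive algebraic step occurs at the interfaces. Combining continuity of $U_m$ with the critical transmission at $r=r_i$ yields the pair $A_m=B_m+C_m r_i^{-2|m|}$ and $A_m+B_m=C_m r_i^{-2|m|}$, which forces $B_m=0$ and $A_m=C_m r_i^{-2|m|}$. The analogous computation at $r=r_e$ gives $E_m=0$ and $D_m=C_m r_e^{-2|m|}$, while the outer Dirichlet condition becomes $D_m R^{|m|}=-P_m(R)$. Collecting these identities yields the closed form
\[
A_m=-\Big(\frac{r_e}{r_i}\Big)^{2|m|}R^{-|m|}P_m(R).
\]
For the homogeneous problem $P_m=0$, so $A_m=C_m=D_m=0$ for every $m\ne 0$; the case $m=0$ is dispatched by the same scheme together with the exclusion of $\ln r$. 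This proves injectivity of $\cL_1$ and, invoking $\Pi_m$ once more, shows that any candidate preimage of $g$ in $\dom\cL_1$ is uniquely determined mode by mode and coincides with the series just constructed.

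Next I would analyse the asymptotic size of $P_m(R)$. With $I_\pm(|m|):=\int_a^b s^{1\pm|m|}\,ds$,
\[
P_m(R)=\frac{h_m}{2|m|}\bigl[R^{|m|}I_-(|m|)-R^{-|m|}I_+(|m|)\bigr].
\]
For large $|m|$ the two terms scale respectively as $|m|^{-1}a^2(R/a)^{|m|}$ and $|m|^{-1}b^2(b/R)^{|m|}$; since $R^2>ab$ the first dominates, giving $|P_m(R)|\asymp |h_m|\,|m|^{-2}\,a^2(R/a)^{|m|}$ two-sidedly. Plugging into the formula for $A_m$ and simplifying via the identity $(r_e/r_i)^{4|m|}R^{-2|m|}(R/a)^{2|m|}r_i^{2|m|}=(r_e^2/(r_i a))^{2|m|}$ yields
\[
|A_m|^2 r_i^{2|m|}\;\asymp\;\frac{|h_m|^2 a^4}{|m|^4}\,\Big(\frac{r_e^2}{r_i a}\Big)^{2|m|}.
\]

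Finally, the $L^2$-norm of $u$ on $B_{r_i}$ equals $\pi\sum_m |A_m|^2 r_i^{2|m|+2}/(|m|+1)$, which is two-sidedly comparable to $\sum_{m\ne 0}|h_m|^2 |m|^{-5}(r_e^2/(r_i a))^{2|m|}$; the contribution from the annulus (expressed via $C_m$) carries the same geometric factor, while that from $(r_e,R)$ (via $D_m$, $E_m$, and the controlled $P_m$) carries weaker factors, and the $m=0$ term is always finite. When the stated series converges, the partial Fourier sums $u^N=\sum_{|m|\le N}U_m(r)e^{\rmi m\theta}$ belong to $\cD^{2}_{1}(\Omega\setminus\Sigma)=\dom L_1$ and satisfy $L_1 u^N\to g$ in $L^2$, so closedness of $\cL_1=\overline{L_1}$ places $u=\lim u^N\in\dom\cL_1$ with $\cL_1 u=g$; when it diverges, the uniqueness obtained above rules out any preimage. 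The remark that the condition is trivially satisfied for $a\ge r_e^2/r_i$ follows because the geometric factor is then $\le 1$, while for $a<r_e^2/r_i$ it grows exponentially and generic $L^2$-data cannot compensate. The point I expect to require the most care is the clean interface bookkeeping, in particular extracting the correct factor $(r_e^2/(r_i a))^{2|m|}$ from the various powers of $r_e/r_i$, $R/a$ and $r_i$ that naturally accumulate.
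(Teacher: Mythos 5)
Your proposal is correct and reaches exactly the stated condition, but it takes a genuinely different route from the paper. The paper works through the boundary triplet and Krein resolvent machinery: it invokes Corollary~\ref{corol5}(d) (with $z=0$, $M^\Pi_0=0$) to recast $g\in\ran\cL_1$ as $\cD^{-1}\gamma^+_1(-\Delta^D_+)^{-1}g_+\in H^{-1/2}(\Sigma)$ where $\cD=D^-_0-\Tilde D^+_0$, then computes the Dirichlet-to-Neumann matrices $D_m$ explicitly and inverts them mode by mode. You instead use rotation invariance to diagonalize $\cL_1$ across Fourier modes, solve each radial transmission problem directly by hand, and close the argument with a density/closedness argument for sufficiency and mode-by-mode uniqueness for necessity. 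Both routes ultimately rest on the same Euler-equation computation and the same estimate for $P_m(R)$ (the paper's $f_m'(r_e)$), so the final arithmetic coincides; yours is more elementary and self-contained in that it avoids Proposition~\ref{prop1}(d) and the Weyl-function reformulation, while the paper's framing places the computation cleanly inside its general resolvent formula (Remark~\ref{rem10}) and explains why the relevant condition is an $H^{-1/2}$ membership on $\Sigma$.

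Two small imprecisions in your write-up worth flagging. First, what excludes the $r^{-|m|}$ branch in $B_{r_i}$ for $m\neq 0$ is the requirement $u\in L^2$, not $\Delta u_\pm\in L^2$ (indeed $r^{-|m|}e^{\rmi m\theta}$ is harmonic away from the origin); only for $m=0$ does $\Delta u_+\in L^2$ kill $\ln r$. Second, your claim that the outer-annulus contribution ``carries weaker factors'' is true but is not visible from treating $D_m r^{|m|}$ and $P_m$ separately: each individually scales like $(R/a)^{|m|}$ and only their near-cancellation for $r>b$ makes $U_m$ small there. A cleaner bookkeeping is to write $U_m=\hat P_m+Q_m$ on $(r_e,R)$ with $\hat P_m$ the Dirichlet solution vanishing at both $r_e$ and $R$ (always uniformly $L^2$-summable since the mode-$m$ Dirichlet Laplacian has spectral gap of order $m^2$) and $Q_m$ the homogeneous part matching the interface data; then $Q_m$ carries the geometric factor $(r_e/a)^{2|m|}$, which is strictly smaller than $(r_e^2/(r_i a))^{2|m|}$ because $r_i<r_e$, confirming that the inner disk dominates.
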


\subsection*{Acknowledgments}
C.C. acknowledges the support of the FIR 2013 project
``Condensed Matter in Mathematical Physics'',
Ministry of University and Research of Italian Republic (code RBFR13WAET).

\section{Preliminaries}\label{sec2} 

\subsection{Boundary triplets}\label{secbt}

For a linear operator $B$ we denote $\dom B$, $\ker B$, $\ran B$, $\sigma(B)$
and $\rho(B)$ its domain, kernel, range, spectrum and resolvent set respectively. For a self-adjoint operator $B$, by $\sigma_\ess(B)$
and $\sigma_\mathrm{p}(B)$ we denote  respectively its essential spectrum and point spectrum (i.e. the set of the eigenvalues).
The scalar product in a Hilbert space $\cH$ will be denoted as $\langle\cdot,\cdot\rangle_{\cH}$
or, if there is no ambiguity, simply as $\langle\cdot,\cdot\rangle$,
and it is assumed anti-linear with respect to the first argument.
By $\cB(\fh,\cH)$ we mean the Banach space of the bounded linear operators
from a Hilbert space $\fh$ to a Hilbert space $\cH$, and we set
$\cB(\cH):=\cB(\cH,\cH)$.

Let us recall the key points of the method of boundary triplets for self-adjoint extensions~\cite{BGP08,DM91,GG}.
Our presentation mostly follows the first chapters of \cite{BGP08}.
Let $S$ be a closed densely defined symmetric operator in a Hilbert space $\cH$. A triplet
$(\fh,\Gamma_1,\Gamma_2)$, where $\fh$ is an auxiliary  Hilbert space
and $\Gamma_1$ and $\Gamma_2$ are linear maps from the domain $\dom S^*$
of the adjoint operator $S^*$ to $\fh$, is called a \emph{boundary triplet}
for $S$ if the following three conditions are satisfied:
\begin{itemize}
\item[(a)] the Green's identity
$\langle u,S^*v\rangle_\cH-\langle S^*u,v\rangle_\cH
=\langle \Gamma_1 u, \Gamma_2v\rangle_\fh-\langle \Gamma_2 u,\Gamma_1 v\rangle_\fh$ holds 
for all $u,v\in \dom S^*$,
\item[(b)] the map $\dom S^*\ni u\mapsto (\Gamma_1 u,\Gamma_2u)\in \fh\times\fh$ is surjective,
\item[(c)] $\ker \Gamma_1\mathop{\cap}\ker\Gamma_2=\dom S$.
\end{itemize}
It is known that a boundary triplet for $S$ exists if and only if $S$
admits self-adjoint extensions, i.e. if its deficiency indices
are equal, $\dim\ker(S^*- i)=\dim\ker(S^*+ i)=:\fn(S)$.
A boundary triplet is not unique, but for any choice of a boundary triplet
$(\fh,\Gamma_1,\Gamma_2)$ for $S$ one has $\dim\fh=\fn(S)$.

Let as assume from now on that the deficiency indices of $S$ are equal
and pick a boundary triplet $(\fh,\Gamma_1,\Gamma_2)$, then
the self-adjoint extensions of $S$
are in a one-to-one correspondence with the self-adjoint linear relations
in $\fh$ (multi-valued self-adjoint operators). In the present text we prefer to keep
the operator language and reformulate this result as follows, cf. \cite{Pos08}:
Let $\Pi:\fh\to\ran\Pi\subseteq\fh$ be an orthogonal projector in $\fh$
and $\Theta$ be a linear operator in the Hilbert space
$\ran\Pi$ with the induced scalar product. Denote by
$A_{\Pi,\Theta}$ the restriction of $S^*$ to
\[
\dom A_{\Pi,\Theta}=\big\{
u\in \dom S^*: \Gamma_1u \in \dom \Theta \text{ and }
\Pi \Gamma_2 u=\Theta \Gamma_1 u
\big\},
\]
then $A_{\Pi,\Theta}$ is symmetric/closed/self-adjoint
iff $\Theta$ possesses the respective property as an operator
in $\ran\Pi$, and one has $\overline{A_{\Pi,\Theta}}=
A_{\Pi,\overline{\Theta}}$, where as usual the bar means
taking the closure. Futhermore, any
self-adjoint extension of $S$ is of the form $A_{\Pi,\Theta}$.

The spectral analysis of the self-adjoint extensions
can be carried out using the associated Weyl functions.
Namely, let $A$ be the restriction of $S^*$
to $\ker\Gamma_1$, i.e. corresponds to $(\Pi,\Theta)=(0,0)$
in the above notation, which is a self-adjoint operator.
For $z\in\rho(A)$ the restriction
$\Gamma_1:\ker(S^*-z)\to \fh$ is a bijection,
and we denote its inverse by $G_z$.
The map $z\mapsto G_z$, called the associated $\gamma$-field,
is then a holomorphic map from $\rho(A)$ to $\cB(\fh,\cH)$ with
\begin{equation}
        \label{gam1}
G_z-G_w=(z-w)(A-z)^{-1}G_w,
\quad z,w\in\rho(A).
\end{equation}
The \emph{Weyl function} associated with the boundary triplet
is then the holomorphic map
\[
\rho(A)\ni z\mapsto M_z:=\Gamma_2 G_z\in \cB(\fh).
\]
To describe the spectral properties of the self-adjoint
operators $A_{\Pi,\Theta}$ let us consider first
the case $\Pi=1$, then $\Theta$ is a self-adjoint operator
in $\cH$, and the following holds, see Theorems 1.29 and
Theorem 3.3 in \cite{BGP08}:
\begin{proposition}\label{prop1}
For any $z\in \rho(A)\mathop{\cap}\rho(A_{1,\Theta})$
one has $0\in \rho(\Theta-M_z)$ and
the resolvent formula
\begin{equation}
        \label{krein1}
(A_{1,\Theta}-z)^{-1}=(A-z)^{-1}
+G_z (\Theta-M_z)^{-1}G_{\overline z}^*
\end{equation}
holds. In addition, for any $z\in\rho(A)$ one has the equivalences:
\begin{itemize}
 \item[(a)] $z\in\sigma(A_{1,\Theta})$ iff $0\in \sigma(\Theta-M_z)$,
 \item[(b)] $z\in\sigma_\ess(A_{1,\Theta})$ iff $0\in \sigma_\ess(\Theta-M_z)$,
 \item[(c)] $z\in\sigma_\mathrm{p}(A_{1,\Theta})$ iff $0\in \sigma_\mathrm{p}(\Theta-M_z)$
with $G_z$ being an isomorphism of the  eigensubspaces.
 \item[(d)] If $f\in \cH$, then $f\in \ran (A_{1,\Theta}-z)$ iff $G_{\overline z}^*f\in \ran (\Theta-M_z)$.
If $\Theta-M_z$ is injective, the resolvent formula \eqref{krein1} still holds for
such $f$.
\end{itemize}
\end{proposition}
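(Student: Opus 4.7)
The plan is to pass between eigenfunction-type decompositions in $\ker(S^*-z)$ and boundary data in $\fh$ via the $\gamma$-field $G_z$, so that every question about $A_{1,\Theta}$ becomes a question about $\Theta-M_z$. Concretely, for fixed $z\in\rho(A)\cap\rho(A_{1,\Theta})$ and $f\in\cH$, I would seek the solution of $(A_{1,\Theta}-z)u=f$ in the form $u=u_0+u_1$ with $u_0=(A-z)^{-1}f\in\ker\Gamma_1$ and $u_1\in\ker(S^*-z)$. Writing $u_1=G_z\phi$ with $\phi\in\fh$, one finds $\Gamma_1 u=\phi$ and $\Gamma_2 u=\Gamma_2 u_0+M_z\phi$, so the abstract boundary condition $\Gamma_2 u=\Theta\Gamma_1 u$ collapses to the equation $(\Theta-M_z)\phi=\Gamma_2 u_0$ in $\fh$. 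Invertibility of $\Theta-M_z$ will be a byproduct of this construction: injectivity is immediate since a zero $\phi$ would give an element of $\ker(A_{1,\Theta}-z)$, contradicting $z\in\rho(A_{1,\Theta})$, and surjectivity/boundedness of the inverse follow by running the construction in reverse from arbitrary $f$.

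The bridge between the two halves of the formula is the identity $\Gamma_2(A-z)^{-1}f=G_{\overline z}^{*}f$, which I would derive by a one-line Green's identity calculation: applying condition (a) of the boundary triplet to $u_0=(A-z)^{-1}f$ and $G_{\overline z}\psi$ for arbitrary $\psi\in\fh$, using $S^*u_0=zu_0+f$, $S^*G_{\overline z}\psi=\overline z\,G_{\overline z}\psi$, $\Gamma_1 G_{\overline z}\psi=\psi$ and $\Gamma_1 u_0=0$, yields $\langle\psi,\Gamma_2 u_0\rangle_\fh=\langle G_{\overline z}\psi,f\rangle_\cH$, i.e.\ $\Gamma_2 u_0=G_{\overline z}^{*}f$. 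Substituting this back and using $G_z$ to reconstruct $u_1$ produces \eqref{krein1} directly, and this simultaneously proves (d): solvability of $(A_{1,\Theta}-z)u=f$ is equivalent to solvability of $(\Theta-M_z)\phi=G_{\overline z}^{*}f$, and the resolvent formula remains meaningful as long as $\Theta-M_z$ is injective.

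For the spectral equivalences I would use the same correspondence $\phi\leftrightarrow u=G_z\phi$. Any $u\in\ker(A_{1,\Theta}-z)$ lies in $\ker(S^*-z)$, hence equals $G_z\phi$ with $\phi=\Gamma_1 u$, and the boundary condition forces $(\Theta-M_z)\phi=0$; conversely any such $\phi$ produces a non-trivial eigenvector, which proves (c) with $G_z$ realising the isomorphism (it is injective on $\ker(S^*-z)$ since $\Gamma_1$ is a bijection there). Statement (a) follows from (d) combined with the already-established invertibility argument: $z\notin\sigma(A_{1,\Theta})$ exactly when $\Theta-M_z$ is bijective, i.e.\ $0\in\rho(\Theta-M_z)$. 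The genuinely delicate step, and what I would treat as the main obstacle, is (b) on the essential spectrum; the natural route is to exploit the compact-looking structure of $G_z(\Theta-M_z)^{-1}G_{\overline z}^{*}$ together with the resolvent difference \eqref{krein1} and the Weyl-sequence characterization of $\sigma_\ess$, converting singular sequences for $A_{1,\Theta}-z$ into singular sequences for $\Theta-M_z$ via $\Gamma_1$ and back via $G_z$, using \eqref{gam1} to control the resolvent corrections and the holomorphy of $M_z$ to handle small perturbations of $z$.
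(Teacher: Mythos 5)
Your proof of item (d) and the Krein formula \eqref{krein1} follows essentially the same route as the paper's Appendix~\ref{app1}: decompose a candidate solution as $(A-z)^{-1}f + G_z\phi$, observe that the abstract boundary condition collapses to $(\Theta-M_z)\phi = \Gamma_2(A-z)^{-1}f$, and use the trace identity $\Gamma_2(A-z)^{-1}f = G_{\overline z}^*f$ to close the loop. The only difference there is cosmetic: you derive that trace identity from Green's formula directly, whereas the paper simply cites it (as \cite[Theorem 1.23]{BGP08}). For parts (a), (b), (c) and the invertibility claim, the paper gives no proof at all and just refers to \cite[Theorems 1.29 and 3.3]{BGP08}, so you are supplying arguments it leaves implicit: your treatments of (a) and (c), and your invertibility argument via the back-and-forth construction, are correct and standard. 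Item (b), which you correctly flag as the delicate step, is left at sketch level; your Weyl-sequence plan is viable (transfer a singular sequence $u_n$ for $A_{1,\Theta}-z$ to $h_n=\Gamma_1 u_n$, use $(A-z)^{-1}$ to control $u_n-G_zh_n$, and use the bounded invertibility of $G_z$ on $\ker(S^*-z)$ to keep $\|h_n\|$ away from zero), but you would need to carry out those estimates explicitly before the proof of (b) is complete. In short: the part the paper actually proves you prove the same way, and the parts the paper cites you mostly re-prove, with (b) not yet fully written out.
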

It seems that the point (d) was not stated explicitly in earlier references, its proof is given
in Appendix~\ref{app1}.

Now let $A_{\Pi,\Theta}$ be an arbitrary self-adjoint extension. Denote by $S_\Pi$
the restriction of $S^*$ to 
\[
\dom S_\Pi=\{
u\in\dom S^*: \Gamma_1 u=\Pi \Gamma_2u=0
\},
\]
which is a closed densely defined symmetric operator whose
adjoint $S^*_\Pi$ is the restriction of $S^*$ to 
\[
\dom S^{*}_\Pi=\{
u\in\dom S^*: \Gamma_1 u\in \ran \Pi
\},
\]
 then $(\ran \Pi,\Gamma^\Pi_1,\Gamma^\Pi_2)$,
with $\Gamma^\Pi_j:=\Pi \Gamma_j$, is a boundary triplet for $S_\Pi$,
and the restriction of $S^*_\Pi$ to $\ker \Gamma^\Pi_1$
is the same operator $A$ as prevously. The associated $\gamma$-field and Weyl function
take the form 
\[
z\mapsto G^\Pi_z:=G_z\Pi^* , \quad
z\mapsto M^\Pi_z:=\Pi M_z\Pi^* ,
\]
and $\dom A_{\Pi,\Theta}:=\{u\in \dom S^*_\Pi: \Gamma^\Pi_2 u=\Theta \Gamma^\Pi_1 u\}$,
see \cite[Remark 1.30]{BGP08}. A direct application of Proposition~\ref{prop1} gives
\begin{corollary}\label{corol5}
For any $z\in \rho(A)\mathop{\cap}\rho(A_{\Pi,\Theta})$
one has $0\in \rho(\Theta-M^\Pi_z)$ and
the resolvent formula
\begin{equation}
        \label{krein2}
(A_{\Pi,\Theta}-z)^{-1}=(A-z)^{-1}
+G^\Pi_z (\Theta-M^\Pi_z )^{-1} (G^\Pi_{\overline z})^*
\end{equation}
holds, and, in addition, for any $z\in\rho(A)$ one has
\begin{itemize}
 \item[(a)] $z\in\sigma(A_{\Pi,\Theta})$ iff $0\in \sigma(\Theta-M^\Pi_z)$,
 \item[(b)] $z\in\sigma_\ess(A_{\Pi,\Theta})$ iff $0\in \sigma_\ess(\Theta-M^\Pi_z)$,
 \item[(c)] $z\in\sigma_\mathrm{p}(A_{\Pi,\Theta})$ iff $0\in \sigma_\mathrm{p}(\Theta-M^\Pi_z)$
with $G^\Pi_z$ being an isomorphism of the  eigensubspaces.
 \item[(d)] If $f\in \cH$, then $f\in \ran (A_{\Pi,\Theta}-z)$ iff $(G^\Pi_{\overline z})^*f\in \ran (\Theta-M^\Pi_z)$.
If $\Theta-M^\Pi_z$ is injective, the resolvent formula \eqref{krein2} still holds for
such $f$.
\end{itemize}
\end{corollary}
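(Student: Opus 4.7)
The plan is to obtain Corollary~\ref{corol5} as a direct transcription of Proposition~\ref{prop1}, applied not to the original boundary triplet $(\fh,\Gamma_1,\Gamma_2)$ for $S$ but to the \emph{reduced} boundary triplet $(\ran\Pi,\Gamma_1^\Pi,\Gamma_2^\Pi)$ for the restricted symmetric operator $S_\Pi$, which has already been identified in the paragraph preceding the corollary. The whole content of the corollary is then the observation that, in this reduced setting, the extension $A_{\Pi,\Theta}$ plays exactly the role of the ``full'' extension $A_{1,\Theta}$ of Proposition~\ref{prop1}, with $\Theta$ regarded as a self-adjoint operator in $\ran\Pi$.

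First I would verify, in one or two lines, that all hypotheses of Proposition~\ref{prop1} are met in the reduced situation. Namely: $S_\Pi$ is closed, densely defined and symmetric (stated above); $(\ran\Pi,\Gamma_1^\Pi,\Gamma_2^\Pi)$ is a boundary triplet for $S_\Pi$; and the restriction of $S_\Pi^*$ to $\ker\Gamma_1^\Pi=\{u\in\dom S^*:\Pi\Gamma_1 u=0\}$ coincides with $A$, since $\ker\Gamma_1\subseteq\ker\Gamma_1^\Pi$ and one may check by a direct density/self-adjointness argument that both operators have the same domain. Next I would check that the $\gamma$-field and Weyl function of the reduced triplet are indeed
\[
G_z^\Pi=G_z\Pi^*\in\cB(\ran\Pi,\cH),\qquad M_z^\Pi=\Pi M_z\Pi^*\in\cB(\ran\Pi).
\]
For the $\gamma$-field, one uses that $\Gamma_1^\Pi=\Pi\Gamma_1$, so for $\varphi\in\ran\Pi$ the element $G_z\Pi^*\varphi=G_z\varphi$ lies in $\ker(S^*-z)\subseteq\ker(S_\Pi^*-z)$ and satisfies $\Gamma_1^\Pi(G_z\varphi)=\Pi\varphi=\varphi$. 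The formula for $M_z^\Pi$ then follows by composition with $\Gamma_2^\Pi=\Pi\Gamma_2$. Finally, the extension $A_{\Pi,\Theta}$ as defined in the main text, $\Pi\Gamma_2 u=\Theta\Gamma_1 u$ with $\Gamma_1 u\in\ran\Pi\cap\dom\Theta$, rewrites in the reduced triplet exactly as the abstract boundary condition $\Gamma_2^\Pi u=\Theta\Gamma_1^\Pi u$, i.e. as the ``full'' extension associated to $\Theta$ in $\ran\Pi$.

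Once this dictionary is in place, statements (a)--(d) and the Krein formula \eqref{krein2} are obtained by substituting $G_z\mapsto G_z^\Pi$, $M_z\mapsto M_z^\Pi$, $\Theta-M_z\mapsto\Theta-M_z^\Pi$ into Proposition~\ref{prop1} verbatim; there is no new analytic content. For (d), one additionally observes that $(G_{\overline z}^\Pi)^*=\Pi(G_{\overline z})^*$, so that the condition on $f$ in Proposition~\ref{prop1}(d) becomes $\Pi(G_{\overline z})^*f=(G_{\overline z}^\Pi)^*f\in\ran(\Theta-M_z^\Pi)$, and the final assertion about the injective case is the same statement of the resolvent formula restricted to such $f$. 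I expect no real obstacle here beyond bookkeeping; the only point that deserves a moment of care is the identification of $A$ as the ``Dirichlet'' extension in both triplets, which ensures that the resolvent $(A-z)^{-1}$ appearing in \eqref{krein2} is literally the same operator as in \eqref{krein1}.
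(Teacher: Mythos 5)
Your proposal is correct and follows essentially the same route as the paper: the paper simply states that Corollary~\ref{corol5} follows by ``a direct application of Proposition~\ref{prop1}'' to the reduced triplet $(\ran\Pi,\Gamma_1^\Pi,\Gamma_2^\Pi)$ for $S_\Pi$, and you have unpacked exactly that substitution. One small inaccuracy worth tightening: $\ker\Gamma_1^\Pi$ should be read inside $\dom S_\Pi^*=\{u\in\dom S^*:\Gamma_1 u\in\ran\Pi\}$, where the condition $\Pi\Gamma_1 u=0$ combined with $\Gamma_1 u\in\ran\Pi$ immediately forces $\Gamma_1 u=0$; thus $\ker\Gamma_1^\Pi=\ker\Gamma_1=\dom A$ without any density argument, and the restriction of $S_\Pi^*$ there is trivially $A$.
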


\subsection{Singular perturbations}
In this section let us recall a special approach to the construction
of boundary triplets as presented in \cite{Pos04} and \cite{Pos08} or in \cite[Section 1.4.2]{BGP08}.
Let $A$ be a self-adjoint operator in a Hilbert space $\cH$, then
we denote by $\cH_A$ the Hilbert space given by the linear space $\dom A$
endowed with the scalar product
$\langle u,v\rangle_{A}=\langle u,v\rangle_\cH+\langle A u,Av\rangle_\cH\,$.
Let $\fh$ be an auxiliary Hilbert space and
$\tau:\cH_A\rightarrow\mathfrak{h}$
be a bounded linear map which is \emph{surjective} and whose kernel
$\ker\tau$ \emph{is dense in } $\cH$, then the restriction $S$
of $A$ to $\ker \tau$ is a closed densely defined symmetric
operator in $\cH$. 
To simplify the formulas
we assume additionally that
\[
0\in\rho(A),
\]
which always holds in the subsequent applications.
For $z\in\rho(A)$ consider the maps
\begin{equation}
     \label{t-gamma}
G_z:=\big( \tau (A-\overline z)^{-1}\big)^*\in \cB(\fh,\cH),\quad M_z:=\tau\big( G_z - G_0)\equiv z\tau A^{-1}G_z \in \cB(\fh).
\end{equation}
\begin{proposition}
The adjoint $S^*$ is given by
\[
\dom S^*:=\big\{u=u_0+G_0 f_u: u_0\in \dom A \text{ and } f_u\in \fh\big\},
\quad
S^*u=Au_0. 
\]
Furthermore, the triplet $(\fh,\Gamma_1,\Gamma_2)$ with $\Gamma_1u:=f_u$
and $\Gamma_2 u:=\tau u_0$ is a boundary triplet for $S$, and 
the associated $\gamma$-field $G_z$ and Weyl function $M_z$
are given by \eqref{t-gamma}.
\end{proposition}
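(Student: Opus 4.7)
The plan is in three stages: identify $\dom S^*$ and the action of $S^*$, verify the three boundary-triplet axioms, and then compute the $\gamma$-field and Weyl function. The single structural identity I will use throughout is $\langle G_0 f, Aw\rangle = \langle f, \tau w\rangle$ for $f \in \fh$ and $w \in \dom A$, which follows directly from the definition $G_0 = (\tau A^{-1})^*$.

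For the identification of $S^*$, note first that $S$ is densely defined and symmetric as a restriction of the self-adjoint $A$ to the dense subspace $\ker\tau$. For the inclusion $\supseteq$, I take any $u = u_0 + G_0 f_u$ with $u_0 \in \dom A$ and $f_u \in \fh$, and compute $\langle u, Sw\rangle = \langle Au_0, w\rangle + \langle f_u, \tau w\rangle = \langle Au_0, w\rangle$ for $w \in \ker\tau$, yielding $u \in \dom S^*$ with $S^* u = A u_0$. For the reverse inclusion, given $u \in \dom S^*$, I set $u_0 := A^{-1} S^* u \in \dom A$ and analyze the linear functional $\phi(w) := \langle u - u_0, A w\rangle$ on $\cH_A$. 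It is bounded (since $\|Aw\| \le \|w\|_A$) and vanishes on $\ker\tau$ because on that subspace $\langle u, Aw\rangle = \langle S^* u, w\rangle = \langle Au_0, w\rangle$. Since $\tau:\cH_A \to \fh$ is a bounded surjection with closed kernel, the open mapping theorem identifies $\cH_A/\ker\tau$ topologically with $\fh$, and Riesz representation produces $f_u \in \fh$ with $\phi(w) = \langle f_u, \tau w\rangle$. Substituting $w = A^{-1} h$ for arbitrary $h \in \cH$ then gives $\langle u - u_0, h\rangle = \langle G_0 f_u, h\rangle$, i.e.\ $u = u_0 + G_0 f_u$. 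Uniqueness of the decomposition follows from $\ran G_0 \cap \dom A = \{0\}$, proved by applying the structural identity to any $v = G_0 f \in \dom A$ and invoking density of $\ker\tau$ together with $0 \in \rho(A)$, combined with the injectivity of $G_0$ (a consequence of surjectivity of $\tau$).

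For the boundary-triplet axioms, the Green identity is a direct expansion: for $u = u_0 + G_0 f_u$ and $v = v_0 + G_0 f_v$ in $\dom S^*$, the terms $\langle u_0, A v_0\rangle - \langle A u_0, v_0\rangle$ cancel by self-adjointness of $A$, and the cross terms $\langle G_0 f_u, A v_0\rangle - \langle A u_0, G_0 f_v\rangle$ reduce via the structural identity to $\langle f_u, \tau v_0\rangle - \overline{\langle f_v, \tau u_0\rangle} = \langle \Gamma_1 u, \Gamma_2 v\rangle - \langle \Gamma_2 u, \Gamma_1 v\rangle$. Surjectivity of $(\Gamma_1, \Gamma_2)$ is obtained by setting $u := u_0 + G_0 a$ with $\tau u_0 = b$, using surjectivity of $\tau$; and $\ker \Gamma_1 \cap \ker \Gamma_2 = \dom S$ is immediate from the definitions. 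Finally, solving $S^* u = z u$ on the ansatz $u = u_0 + G_0 f$ with $\Gamma_1 u = f$ gives $(A-z) u_0 = z G_0 f$, hence $u_0 = z(A-z)^{-1} G_0 f$ and $G_z f = u = A(A-z)^{-1} G_0 f$. A short adjoint computation $\langle A(A-z)^{-1} G_0 f, h\rangle = \langle G_0 f, A(A-\overline z)^{-1} h\rangle = \langle f, \tau(A-\overline z)^{-1} h\rangle$ identifies this with $(\tau(A-\overline z)^{-1})^*$, matching \eqref{t-gamma}. The Weyl function is $M_z f := \Gamma_2 G_z f = \tau u_0 = z \tau(A-z)^{-1} G_0 f$, which equals both $\tau(G_z - G_0) f$ (since $G_z f - G_0 f = u_0 \in \dom A$) and $z \tau A^{-1} G_z f$ (via $A^{-1} G_z = (A-z)^{-1} G_0$), again matching \eqref{t-gamma}.

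The main obstacle is the $\subseteq$ inclusion in the identification of $\dom S^*$: it is the only non-algebraic step, and hinges on combining the open mapping theorem with Riesz representation to extract the boundary datum $f_u$ from the abstract functional $\phi$. Once this structural step is settled, every remaining claim reduces to direct manipulations of the identity $\langle G_0 f, Aw\rangle = \langle f, \tau w\rangle$.
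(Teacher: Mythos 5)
Your proof is correct, and it fills in the details that the paper omits (the paper simply cites \cite{Pos04}, \cite{Pos08}, and \cite[Section~1.4.2]{BGP08} for this proposition rather than proving it). The argument is essentially the standard one: the structural identity $\langle G_0 f, Aw\rangle = \langle f,\tau w\rangle$ does all the algebraic work; the decomposition $\dom S^* = \dom A \dotplus \ran G_0$ is obtained by factoring the bounded functional $w\mapsto\langle u-u_0,Aw\rangle$ on $\cH_A$ through $\cH_A/\ker\tau\cong\fh$ and applying Riesz, and uniqueness follows from $\ran G_0\cap\dom A=\{0\}$ together with injectivity of $G_0$ (both consequences of density of $\ker\tau$, surjectivity of $\tau$, and $0\in\rho(A)$). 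The verification of the three boundary-triplet axioms and the identification of the $\gamma$-field $G_z = A(A-z)^{-1}G_0 = (\tau(A-\bar z)^{-1})^*$ and Weyl function $M_z=\tau(G_z-G_0)=z\tau A^{-1}G_z$ are routine once the decomposition is in place, and your computations are accurate, including the correct handling of the convention that the inner product is anti-linear in the first argument when passing $\overline{\langle f_v,\tau u_0\rangle}=\langle\Gamma_2 u,\Gamma_1 v\rangle$ in the Green identity.
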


\begin{example}
\label{sum}

Let $A_\pm$ be self-adjoint operators in Hilbert spaces $\cH_\pm$
with $0\in\rho(A_\pm)$,
and let $\fh_\pm$, $\tau_\pm$, $S_\pm$, $G^\pm_z$, $M^\pm_z$, $\Gamma^\pm_j$
be the spaces and maps defined as above and associated with $A_\pm$.
For $\nu\in\RR\setminus\{0\}$ consider the operator $A:=A_-\osum \nu A_+$
acting in the Hilbert space $\cH:=\cH_-\osum \cH_+$.
Set $\tau=\tau_{-}\osum \nu\tau_{+}$, then
the restriction $S$ of $A$ to $\ker\tau$ has again the structure
of a direct sum, $S=S_{-}\oplus \nu S_{+}$, with $\gamma$-field and Weyl function
given by
\[
G_z= G^-_z\oplus G^+_{\frac{z}{\nu}},
\qquad
M_z=M^-_z \oplus \nu M^+_{\frac{z}{\nu}}.
\]
Thus, by the preceding considerations, the adjoint $S^*$
acts on the domain
\begin{equation}
   \label{domadj1}
\dom S^*= \big\{
u=(u_{-},u_{+}): \quad u_{\pm}=u^\pm_0+G_0^\pm\phi_\pm, \quad
u^\pm_0\in\dom A_{\pm},\quad \phi_{\pm}\in\fh_\pm
\big\}
\end{equation}
by $S^*(u_-,u_+)=A(u^-_0,u^+_0)$,
and one can take $(\fh_-\oplus\fh_+,\Gamma_1,\Gamma_2)$
as a boundary triplet for $S$,
\begin{equation}
      \label{ebt1}
\Gamma_1 u= (\phi_-,\phi_+),
\quad
\Gamma_2 u=(\tau_- u_0^-,\nu\tau_+u^+_0).
\end{equation}

\end{example}

\section{Boundary triplets for indefinite Laplacians}\label{sec3}

We start with some constructions for the closed symmetric operator 
\begin{equation}
       \label{eq-ss}
S=(-\Delta^{\min}_{-})\oplus\mu\Delta^{\min}_{+},\qquad
\Delta^{\min}_{\pm}:L^{2}(\Omega_{\pm})\supset H^{2}_{0}(\Omega_{\pm})
\to L^{2}(\Omega_{\pm}),
\end{equation}
where 
\begin{align*}
H^{2}_{0}(\Omega_{-})&:=\big\{u_{-}\in H^{2}(\Omega_{-}):(\gamma_0^{-}u_-,\gamma_1^{-}u_-)=(0,0) \big\}\,,\\
H^{2}_{0}(\Omega_{+})&:=\big\{u_{+}\in H^{2}(\Omega_{+}):(\gamma_0^{+}u_+,\gamma_1^{+}u_+,\gamma_{0}^{\partial}u_{+},\gamma_{1}^{\partial}u_{+})=(0,0,0,0)\big\}\,.
\end{align*}
Here and later on, $H^m(\Omega_{\pm})$ denotes the usual Sobolev-Hilbert
space of the square-integrable functions on $\Omega_{\pm}$ with square integrable partial
(distributional) derivatives of any order $k\le m$, and the linear operators 
\begin{align*}
\gamma_{0}^{\pm}:\,&H^2(\Omega_{\pm})\to
H^{\frac 32}(\Sigma)\,,&
\gamma_{1}^{\pm}:\,&H^2(\Omega_\pm)\to
H^{\frac 12}(\Sigma)\,,\\
\gamma_{0}^{\partial}:\,&H^2(\Omega_{+})\to
H^{\frac 32}(\partial\Omega)\,,&
\gamma_{1}^{\partial}:\,&H^2(\Omega_+)\to
H^{\frac 12}(\partial\Omega)\,,
\end{align*}
are the usual trace maps first defined on $u_{\pm}\in C^{\infty}(\overline{ \Omega_{\pm}})$ by 
\begin{align*}
\gamma_{0}^{\pm}u_{\pm}(x)&:=u_{\pm}(x),&\quad
\gamma_{1}^{\pm}u_{\pm}(x)&:=
N_{\pm}(x)\!\cdot\!\nabla u_{\pm}(x), && \quad x\in\Sigma,\\
\gamma_{0}^{\partial}u_{+}(x)&:=u_{+}(x),&\quad
\gamma_{1}^{\partial}u_{+}(x)&:=
N_{\partial}(x)\!\cdot\!\nabla u_{+}(x),
&&
\quad
x\in\partial\Omega\,,
\end{align*}
with $N_{\partial}$ being the outer unit normal on $\partial\Omega$,
and then extended by continuity.
It is well known, see e.g. \cite[Chapter 1, Section 8.2]{LM}, that the maps
\begin{align*}
H^{2}(\Omega_{-})\ni u_-&\mapsto  (\gamma_{0}^{-}u_{-},\gamma_{1}^{-}u_{-})\in H^{\frac 32}(\Sigma)\oplus H^{\frac12}(\Sigma)\,,\\
H^{2}(\Omega_{+})\ni u_+&\mapsto (\gamma_{0}^{+}u_{+},\gamma_{1}^{+}u_{+},\gamma_{0}^{\partial}u_{+},\gamma_{1}^{\partial}u_{+})
\in H^{\frac32}(\Sigma)\oplus H^{\frac 12}(\Sigma)\oplus H^{\frac 32}(\partial\Omega)\oplus H^{\frac 12}(\partial\Omega)
\end{align*}
are bounded and surjective.

We remark that both $\Sigma$ and $\partial\Omega$ can be made smooth compact Riemannian manifolds.
For $\Xi=\Sigma$ or $\Xi=\partial\Omega$, the fractional order Sobolev-Hilbert spaces $H^s(\Xi)$
with $s\in\RR$, are defined in the standard way
as the completions of $C^\infty(\Xi)$ with respect to the scalar
products 
\[
\langle \phi_{1},\phi_{2}\rangle_{H^s(\Xi)}
:=\big\langle \phi_{1},(-\Delta_{\Xi}+1)^{s}\phi_{2}\big\rangle_{L^2(\Xi)},
\]
where $\Delta_{\Xi}$ is the (negative) Laplace-Beltrami operator in 
$L^2(\Xi)$, see e.g. \cite[Remark 7.6, Chapter 1, Section 7.3]{LM},
and then $(-\Delta_{\Xi}+1)^{\frac s2}$ extends to a unitary map
from $H^{r}(\Xi)$ to $H^{r-s}(\Xi)$. In what follows we denote for shortness
\[
\Lambda:=\sqrt{-\Delta_{\Sigma}+1}, \quad
\Lambda_{\partial}:=\sqrt{-\Delta_{\partial\Omega}+1}.
\]
By Green's formula, the linear operators $\gamma_{0}^{\pm}$, $\gamma_{1}^{\pm}$, $\gamma_{0}^{\partial}$ and 
$\gamma_{1}^{\partial}$ can be then
extended to continuous (with respect to the graph norm) maps
\begin{equation}
    \label{eq-g01}
\begin{aligned}
\gamma_{0}^{\pm}&:\dom\Delta^{\max}_{\pm}\to H^{-\frac 12}(\Sigma)\,,&
\gamma_{1}^{\pm}&:\dom \Delta^{\max}_{\pm}\to H^{-\frac 32}(\Sigma)\,,\\
\gamma_{0}^{\partial}&:\dom \Delta^{\max}_+\to H^{-\frac 12}(\partial\Omega)\,,&
\gamma_{1}^{\partial}&:\dom\Delta^{\max}_+\to H^{-\frac 32}(\partial\Omega)\,,
\end{aligned}
\end{equation}
where $\Delta^{\max}_{\pm}:=(\Delta^{\min}_{\pm})^{*}$ acts as the distributional Laplacian on the domain 
\[
\dom \Delta^{\max}_{\pm}:=\big\{u_{\pm}\in L^2(\Omega_{\pm})\,:\, \Delta u_{\pm}\in L^2(\Omega_{\pm})\big\}\,,
\] 
see \cite[Chapter 2, Section 6.5]{LM}.
Now consider the operator
\[
A=(-\Delta^{D}_-)\oplus\mu\Delta^{D}_+
\]
acting in $L^{2}(\Omega)\equiv L^2(\Omega_{-})\oplus L^2(\Omega_{+})$,
where $\Delta^{D}_\pm$ are  the Dirichlet Laplacians in $L^2(\Omega_\pm)$, i.e.
\begin{align*}
\dom \Delta^{D}_-&=\big\{u_{-}\in H^2(\Omega_{-})\,:\, \gamma^{-}_{0}u_{-}=0\big\}\,,\\
\dom \Delta^{D}_+&=\big\{u_{+}\in H^2(\Omega_{+})\,:\, (\gamma^{+}_{0}u_{+},\gamma^{\partial}_{0}u_{+})=(0,0)\big\}\,.
\end{align*}
As both $\Delta^D_\pm$ are self-adjoint with compact resolvents, the same applies to $A$. The maps
\begin{align*}
\tau_{-}&:\dom \Delta^{D}_- \to H^{\frac 12}(\Sigma)\,, &\tau_{-}u_{-}&:=\gamma_{1}^{-}u_{-}\,,\\
\tau_{+}&:\dom \Delta^{D}_+\to H^{\frac 12}(\Sigma)\osum H^{\frac 12}(\partial\Omega)\,, &\tau_{+}u_{+}&:=(\gamma_{1}^{+}u_{+},\gamma_{1}^{\partial}u_{+})\,,
\end{align*}
are linear, continuous, surjective, and their kernels are dense in $L^{2}(\Omega_\pm)$.
Moreover, $\Delta^{\min}_\pm$ is exactly the restriction of $\Delta^{D}_\pm$ to $\ker \tau_{\pm}$.
Therefore, we may use the construction of Example~\ref{sum} with $\nu=-\mu$ to obtain a description
of the self-adjoint extensions of $S$ from \eqref{eq-ss}.
To this end, an expression for the associated operators $G_{z}^{\pm}$
and $M^{\pm}_{z}$ is needed. These were already obtained in
\cite[Example 5.5]{Pos08}, and we recall the final result.
The Poisson operators
\[
P_{z}^{-}:H^{s}(\Sigma)\to \dom \Delta^{\max}_-\,,\qquad
P_{z}^{+}:H^{s}(\Sigma)\osum H^{s}(\partial\Omega)\to \dom\Delta^{\max}_+
\,,\qquad s\ge -\frac12\,,
\]
are defined through the solutions of the respective boundary value problems,
\begin{align*}
P_{z}^{-} \phi = f &\text{ iff }
\left\{
\begin{aligned}
-\Delta^{\max}_- f &=zf,\\
\gamma_{0}^{-} f&=\phi\,,
\end{aligned}
\right., \quad z\in \rho(-\Delta^D_-),\\
P_z^+(\varphi,\psi)=g & \text{ iff }
\left\{
\begin{aligned}
-\Delta^{\max}_+g&=zg,\\
\gamma_{0}^{+} g&=\varphi\\
\gamma_{0}^{\partial} g&=\psi.
\end{aligned}
\right.,
 \quad z\in \rho(-\Delta^D_+),
\end{align*}
and the associated (energy-dependent)
Dirichlet-to-Neumann operators are given by
\begin{align*}
D_{z}^{-}&:H^{s}(\Sigma)\to 
H^{s-1}(\Sigma)\,,\quad
D^{-}_{z}:=\gamma_{1}^{-} P^{-}_{z}\,,\quad s\ge -\frac12\,,\\
D_{z}^{+}&:H^{s}(\Sigma)\oplus H^{s}(\partial\Omega)\to 
H^{s-1}(\Sigma)\oplus H^{s-1}(\partial\Omega)
\,,\quad s\ge -\frac12\,,\\
&D^{+}_{z}(\varphi,\psi):=\Big(
\gamma_{1}^{+} P^{+}_{z}(\varphi,\psi),
\gamma_{1}^{\partial} P^{+}_{z}(\varphi,\psi)
\Big)\,,
\end{align*}
then 
\begin{align*}
G_{z}^{-}&=-P^{-}_{z}\Lambda\,, & M_{z}^{-}&=(D^{-}_{0}-D_{z}^{-})
\Lambda\,,\\
G_{z}^{+}&=-P^{+}_{z}(\Lambda\oplus\Lambda_{\partial})\,,&
M_{z}^{+}&=(D^{+}_{0}-D_{z}^{+})(\Lambda\oplus\Lambda_{\partial}).
\end{align*}

Thus, by Remark \ref{sum}, the adjoint $S^*$ acts as
$S^*u=(-\Delta^{\max}_-u_{-}, \mu \Delta^{\max}_+ u_{+})$
on the domain $\dom (\Delta^{\max}_-\oplus \Delta^{\max}_+)$,
and using \eqref{domadj1} and \eqref{ebt1} one obtains
the boundary triplet $(\fh,\Gamma_1,\Gamma_2)$ for $S$
with $\fh=H^{\frac 12}(\Sigma)\oplus H^{\frac 12}(\Sigma)\oplus H^{\frac 12}(\partial\Omega)$
and
\[
\Gamma_1 u=-
\begin{pmatrix}
\Lambda^{-1}\gamma^{-}_{0}u_{-}\\
\Lambda^{-1}\gamma^{+}_{0}u_{+}\\
\Lambda^{-1}_\partial\gamma^{\partial}_{0}u_{+}
\end{pmatrix},
\quad
\Gamma_2u=
\begin{pmatrix}
\gamma^{-}_{1}(u_{-}-P_0^{-}\gamma_{0}^{-}u_{-})\\
-\mu\gamma_{1}^{+}\big(u_{+} -P_0^{+}(\gamma_{0}^{+}u_{+},\gamma_{0}^{\partial}u_{+})\big)\\
-\mu\gamma_{1}^{\partial}\big(u_{+}
-P_0^{+}(\gamma_{0}^{+}u_{+},\gamma_{0}^{\partial}u_{+})\big)
\end{pmatrix}.
\]
The associated $\gamma$-field $G_z$ and $M_z$ are given by
\[
G_z\begin{pmatrix}
\varphi_-\\
\varphi_+\\
\varphi_\partial
\end{pmatrix}
=
-
\begin{pmatrix}
P^-_z\Lambda \varphi_-\\
P^+_{-\frac{z}{\mu}}(\Lambda \varphi_+,\Lambda_\partial\varphi_\partial)
\end{pmatrix},
\quad
M_z
\begin{pmatrix}
\varphi_-\\
\varphi_+\\
\varphi_\partial
\end{pmatrix}
=
\begin{pmatrix}
(D^-_0-D^-_z)\Lambda \varphi_-\\
-\mu\big(D^+_0-D^+_{-\frac{z}{\mu}}\big)(\Lambda \varphi_+,\Lambda\varphi_\partial)
\end{pmatrix}.
\]

Let us represent the operator $L_\mu$ given by \eqref{opl} and \eqref{doml}
in the form $A_{\Pi,\Theta}$. Remark first that, in view of the elliptic regularity,
see e.g. \cite[Proposition III.5.2]{Grubb}, we have
\begin{align*}
H^2(\Omega_-)&=\big\{u_-\in L^2(\Omega_-): \Delta^{\max}_-u_-\in L^2(\Omega_-), \ \gamma^+_0u_-\in H^{\frac 32}(\Sigma)\big\}\\
H^2(\Omega_+)&=\big\{u_+\in L^2(\Omega_+): \Delta^{\max}_+u_+\in L^2(\Omega_+), \quad (\gamma^+_0 u_+,\gamma^\partial_0 u_+)\in H^{\frac 32}(\Sigma)\osum H^{\frac 32}(\partial\Omega)\big\}.
\end{align*}
Therefore, $L_\mu$ is exactly the restriction of $S^*$ to the functions $u=(u_-,u_+)$ with
\begin{equation}
      \label{eq-lll}
\quad \gamma^-_0u_-=\gamma^+_0u_+=:\gamma_0 u,
\quad \gamma^\partial_0u_+=0,
\quad
\gamma_0u\in H^{\frac 32}(\Sigma),
\quad
\gamma^-_1 u_-=\mu \gamma^+_1 u_+.
\end{equation}
The first two conditions can be rewritten as $\Gamma_1 u\in \ran \Pi$, where
$\Pi$ is the orthogonal projector in $\fh$ given by
\[
\Pi(\varphi_-,\varphi_+,\varphi_\partial)=\dfrac{1}{2}(\varphi_-+\varphi_+,\varphi_-+\varphi_+,0)\,.
\]
 For the subsequent computations it is useful to introduce the unitary operator
\[
U:\ran \Pi\to H^{\frac{1}{2}}(\Sigma), \quad
U(\varphi,\varphi,0)=\sqrt{2}\,\varphi,
\]
then
\[
U\Pi\Gamma_2u=\dfrac{1}{\sqrt 2}\Big[\gamma^{-}_{1}(u_{-}-P_0^{-}\gamma_{0}^{-}u_{-})
-\mu\gamma_{1}^{+}\big(u_{+} -P_0^{+}(\gamma_{0}^{+}u_{+},\gamma_{0}^{\partial}u_{+})\big)\Big],
\]
and the third and the fourth conditions in \eqref{eq-lll} can be rewritten respectively as
\[
\Gamma_1 u\in U^* \dom \Theta_\mu, \quad
U\Pi \Gamma_2 u= \Theta_\mu U\Gamma_1 u,
\]
where   $\Theta_\mu$ is the symmetric operator in $H^{\frac{1}{2}}(\Sigma)$ given by
\[
\Theta_\mu:=\dfrac{1}{2}(D^-_0-\mu \Tilde D^+_0)\Lambda,
\quad
\dom \Theta_\mu=H^{\frac52}(\Sigma),
\]
with
\[
\Tilde D^+_z:=\gamma_1^+ \Tilde P^+_z,
\quad
\Tilde P^+_z:=P^+_z(\cdot, 0).
\]
Therefore, one has the representation $L_{\mu}=A_{\Pi,U^*\Theta_\mu U}$,
and, due to the unitarity of $U$ and to the discussion of Section~\ref{sec2},
the operator $L_\mu$ is self-adjoint/essentially self-adjoint in $L^2(\Omega)$
if and only if $\Theta_\mu$ has the respective
property as an operator in $H^{\frac 12}(\Sigma)$.
Remark that for the associated maps $G^\Pi_z:=G_z\Pi^{*} $
and $M^\Pi_z:=\Pi M_z \Pi^{*} $ (see Subsection~\ref{secbt}) one has
\begin{equation}
           \label{agm}
G^\Pi_z U^*\varphi
=-\dfrac{1}{\sqrt 2}\begin{pmatrix}
P^-_z\Lambda \varphi\\[\medskipamount]
\Tilde P^+_{-\frac{z}{\mu}}\Lambda \varphi
\end{pmatrix},
\quad
U M^\Pi_zU^* =\dfrac{1}{2}\Big(
(D^-_0-D^-_z)-\mu(\Tilde D^+_0-\Tilde D^+_{-\frac{z}{\mu}})
\Big) \Lambda.
\end{equation}

\section{Proofs of main results}\label{sec4}

With the above preparations, the proofs will be based on an application 
of the theory of pseudodifferential operators, see e.g. \cite{T} and \cite{Taylor}.
At first we recall some  known results adapted to our setting.

If $\Psi\in \cB\big(H^{s}(\Sigma), H^{s-k}(\Sigma)\big)$ is a symmetric pseudodifferential operator of order $k$,
we set $k_0:=\max(k,0)$ and denote by $\Psi^{\min}$ and $\Psi^{0}$ the symmetric operators in $L^{2}(\Sigma)$ given
by the restriction of $\Psi$ to $\dom \Psi^{\min}=C^\infty(\Sigma)$ and $\dom \Psi^{0}=H^{k_0}(\Sigma)$ respectively,
then $\Psi^{0}\subseteq \overline {\Psi^{\min}}\subseteq \overline{\Psi^{0}}$.
Furthermore, if  $\Psi$ is elliptic, then  $\Psi^{0}$ is closed and, hence, $\overline {\Psi^{\min}}=\Psi^{0}$.
Since $\dom(\Psi^{\min})^{*}=\{f\in L^{2}(\Sigma):\Psi f\in L^{2}(\Sigma)\}$, for elliptic $\Psi$ one has $\dom(\Psi^{\min})^{*}\subseteq H^{k_0}(\Sigma)=\dom \overline {\Psi^{\min}}$, and so $\Psi^{\min}$ is essentially self-adjoint  and $\Psi^{0}$ is self-adjoint. It is important to
recall that for $k=1$ one does not need the ellipticity:
\begin{lemma}\label{ess}
If $\Psi$ is a symmetric first order pseudodifferential operator, then $\Psi^{\min}$, and then also $\Psi^0$, is essentially self-adjoint in $L^2(\Sigma)$.
\end{lemma}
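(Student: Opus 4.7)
The plan is to prove essential self-adjointness directly, by showing $\dom(\Psi^{\min})^*\subseteq\dom\overline{\Psi^{\min}}$; the reverse inclusion is automatic from symmetry. By the preamble of the lemma one has the identification
\[
\dom(\Psi^{\min})^* = \{f\in L^2(\Sigma) : \Psi f\in L^2(\Sigma)\},
\]
where $\Psi f$ is understood distributionally. Given such an $f$, I must exhibit a sequence $f_\epsilon\in C^\infty(\Sigma)=\dom\Psi^{\min}$ such that $f_\epsilon\to f$ and $\Psi f_\epsilon\to\Psi f$ in $L^2(\Sigma)$. The tool is a Friedrichs mollifier argument adapted to the first-order symbol calculus. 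Once the claim is established for $\Psi^{\min}$, essential self-adjointness of $\Psi^0$ follows at once from the sandwich $\Psi^{\min}\subseteq\Psi^0\subseteq\overline{\Psi^{\min}}$ recalled in the preamble.

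I would choose $J_\epsilon:=\chi(\epsilon\Lambda)$ with $\chi\in C^\infty_c(\RR)$, $\chi(0)=1$, defined through the spectral calculus of $\Lambda=\sqrt{-\Delta_\Sigma+1}$. For each fixed $\epsilon>0$ the operator $J_\epsilon$ is smoothing, so $f_\epsilon:=J_\epsilon f\in C^\infty(\Sigma)$. As $\epsilon\to 0^+$, $J_\epsilon$ converges strongly to the identity on $L^2(\Sigma)$ and on every $H^s(\Sigma)$, and the family $\{J_\epsilon\}_{\epsilon\in(0,1]}$ is uniformly bounded as a family of order-$0$ pseudodifferential operators. In particular $f_\epsilon\to f$ in $L^2$, and writing
\[
\Psi f_\epsilon = J_\epsilon\Psi f + [\Psi,J_\epsilon]f,
\]
the first summand converges to $\Psi f$ in $L^2$ because $\Psi f\in L^2$ and $J_\epsilon\to I$ strongly.

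The decisive point, and the main obstacle, is the uniform estimate $\|[\Psi,J_\epsilon]\|_{L^2\to L^2}\le C$ with $C$ independent of $\epsilon\in(0,1]$. By the symbol calculus, the principal symbols of $\Psi J_\epsilon$ and $J_\epsilon\Psi$ coincide, so although $\Psi$ has order $1$ the commutator is a family of order-$0$ pseudodifferential operators, uniformly bounded as $\epsilon\to 0$; this is the content of the first-order Friedrichs lemma, see e.g.~\cite{Taylor,T}. Granted this bound, for $g\in C^\infty(\Sigma)$ one has $J_\epsilon g\to g$ in $H^1(\Sigma)$, so $[\Psi,J_\epsilon]g=\Psi(J_\epsilon g-g)-(J_\epsilon-1)\Psi g\to 0$ in $L^2$; uniform boundedness together with density of $C^\infty(\Sigma)$ in $L^2(\Sigma)$ then yields $[\Psi,J_\epsilon]f\to 0$ for every $f\in L^2(\Sigma)$. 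Combining the two limits gives $\Psi f_\epsilon\to\Psi f$ in $L^2$, hence $f\in\dom\overline{\Psi^{\min}}$, and essential self-adjointness follows. I emphasise that no ellipticity of $\Psi$ is used; the first-order hypothesis enters solely through the symbol-level cancellation that produces the uniform commutator bound.
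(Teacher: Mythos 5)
Your argument is correct and is in substance the same as the paper's: the authors simply invoke \cite[Proposition 7.4]{T} for the statement that every $f\in L^2(\Sigma)$ with $\Psi f\in L^2(\Sigma)$ admits smooth approximants $f_j\to f$ with $\Psi f_j\to\Psi f$ in $L^2$, which is exactly the first-order Friedrichs lemma you reprove here with the mollifier $J_\epsilon=\chi(\epsilon\Lambda)$ and the uniform $OPS^0$ bound on $[\Psi,J_\epsilon]$. Your version is thus a self-contained expansion of the paper's one-line citation rather than a different route, and the deduction for $\Psi^0$ via $\Psi^{\min}\subseteq\Psi^0\subseteq\overline{\Psi^{\min}}$ is also the same.
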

\begin{proof}
By \cite[Proposition 7.4]{T}, for any $f\in L^{2}(\Sigma)$ with $\Psi f\in L^{2}(\Sigma)$
there exist $(f_{j})\subset C^{\infty}(\Sigma)$ such that $f_{j}\to f$ and $\Psi^{\min}f_{j}\to \Psi f$ in $L^{2}(\Sigma)$,
which literally means that $\dom (\Psi^{\min})^{*}\subseteq \dom \overline {\Psi^{\min}}$.
\end{proof}

In what follows, instead of studying $\Theta_\mu$ in $H^{\frac 12}(\Sigma)$ we prefer
to deal with the unitarily equivalent operator $\Phi_\mu:=\Lambda^{\frac 12}\Theta_\mu \Lambda^{-\frac 12}$
acting in $L^2(\Sigma)$. Set
\[
\Psi_{\mu}:=\frac12\,\Lambda^{{\frac12}}(D^{-}_{0}-\mu\Tilde D^{+}_{0})\Lambda^{{\frac12}},
\]
then $\Phi_\mu$ is the restriction of $\Psi_\mu$ to $\dom \Phi_\mu=H^2(\Sigma)$.
Furthermore, denote by $\Psi^{\min}_{\mu}$ and $\Psi_{\mu}^{0}$ the symmetric operators in $L^{2}(\Sigma)$  given respectively by the restrictions of $\Psi_{\mu}$
to $\dom\Psi_{\mu}^{\min}=C^{\infty}(\Sigma)$ and to $\dom\Psi_{\mu}^{0}=H^{k_0}(\Sigma)$,
where $k$ is the order of $\Psi_{\mu}$ and $k_0=\max(k,0)$. Remark that we always have $k\le 2$, hence,
$\Psi_\mu^{\min}\subseteq \Phi_\mu\subseteq \Psi_\mu^0$.

\begin{proof}[\bf Proof of Theorem~\ref{thm1}]
Assume first that $\mu\ne 1$. Let us show that the operator $\Theta_\mu$ is self-adjoint in $H^{\frac 12}(\Sigma)$, then this will imply
the self-adjointness of  $L_{\mu}$ in $L^2(\Omega)$. It is a classical result  that $D^\pm_0$ are first order pseudodifferential
operators with the principal symbol $|\xi|$, see e.g. \cite[Chapter 7, Section 11]{Taylor},
and, in view of the definition, the same applies then to $\Tilde D^+_0$.
Then $\Psi_\mu$ is a pseudodifferential operator with principal symbol $\frac{1-\mu}{2}|\xi|^2$.
As such a principal symbol is non-vanishing, $\Psi_\mu$ is a second order 
elliptic pseudodifferential operator and, by the results recalled at the beginning of the section,
$\Phi_\mu\equiv\Psi^{0}_{\mu}$ is self-adjoint on the domain $H^{2}(\Sigma)$.
Hence, since $\Lambda^{{\frac12}}:H^{\frac 12}(\Sigma)\to L^2(\Sigma)$ is unitary,  the operator
$\Theta_{\mu}=\Lambda^{-\frac12}\Phi_\mu\Lambda^{\frac12}$ is self-adjoint on the initial domain
$H^{{\frac52}}(\Sigma)$, which implies the self-adjointness of $L_\mu$ on the initial domain
$\cD^2_\mu(\Omega\setminus\Sigma)$.
Due to \eqref{cdd1} we have $\dom L_\mu\subseteq H^1_0(\Omega)$,
and the compact embedding of $H^1_0(\Omega)$ into $L^2(\Omega)$ proves that the resolvent of $L_\mu$
is a compact operator.

Let $\mu=1$, then $\Psi_1$ is a first order pseudodifferential operator, and $\Psi^{\min}_1$
is essentially self-adjoint due to Lemma~\ref{ess}. Then $\Phi_1$ is also
essentially self-adjoint being a symmetric extension of $\Psi^{\min}_1$.
The unitarity of $\Lambda^{{\frac12}}:H^{\frac12}(\Sigma)\to L^2(\Sigma)$
implies the essential self-adjointness of $\Theta_1$ in $H^{\frac 12}(\Sigma)$
and, in turn, that of $L_1$ in $L^2(\Omega)$.
\end{proof}

Recall that in what follows we denote by $\cL_1$ the unique self-adjoint extension of $L_1$.
In view of the discussion of Section~\ref{sec3} one has   $\cL_1=A_{\Pi,U^*\Theta U}$ 
with $\Theta=\overline{\Theta_1}$ being the closure (and the unique self-adjoint extension) of $\Theta_1$
in $H^{\frac 12}(\Sigma)$.

\begin{proof}[\bf Proof of Theorem~\ref{thm2}]
Assume that $n=2$ and $\mu=1$, then $\Psi_1=\frac{1}{2}\Lambda^{{\frac12}}(D^-_0-\Tilde D^+_0)\Lambda^{{\frac12}}$.
It is well known that the \emph{full} symbol of the classical Dirichlet-to-Neumann map  (at $z=0$) on a smooth bounded
two-dimensional domain with respect to the arclength is equal to $|\xi|$, see \cite[Proposition 1]{edw} for a direct proof
or \cite[Section 1]{uhl} for an iterative computation.
It follows that $\Psi_1$ is a symmetric pseudodifferential operator of order $(-\infty)$,
hence $\Psi^{0}_{1}:L^{2}(\Sigma)\to L^{2}(\Sigma)$ is bounded, self-adjoint and compact. 
As $\Phi_1$ is densely defined, it follows that $\overline{\Phi_1}=\Psi^0_1$.
Since $\Lambda^{{\frac12}}:H^{\frac 12}(\Sigma)\to L^2(\Sigma)$ is unitary, 
the closure of $\Theta_1$ is given by $\Theta=\Lambda^{-{\frac12}}\Phi_1\Lambda^{{\frac12}}: H^{{\frac12}}(\Sigma)\to H^{{\frac12}}(\Sigma)$,
and it is a compact self-adjoint operator in $H^{\frac 12}(\Omega)$.
As $\dom \Theta=H^{\frac 12}(\Sigma)$, the boundary condition $\Gamma_1 u\in U^*\dom \Theta$ takes the form
$\gamma^-_0 u_-=\gamma^+_0 u_+\in H^{-\frac 12}(\Sigma)$, $\gamma^\partial_0u_+=0$,
and, in view of \eqref{eq-g01}, the domain of $\cL_1=A_{\Pi,U^*\Theta U}$ is given by \eqref{dom2}.

Let us study the spectral properties of $\cL_{1}$ using Corollary~\ref{corol5}.
As $U^*\Theta U - M^\Pi_0\equiv U^*\Theta U$ is compact, one has $0\in\sigma_\ess(U^*\Theta U - M^\Pi_0)$ implying $0\in\sigma_\ess(\cL_1)$.
To prove the reverse inclusion $\sigma_\ess(\cL_1)\subseteq\{0\}$ we note first
that the operators $U^*\Theta U-M^\Pi_z$ are unitarily equivalent to $\Theta-UM^\Pi_zU^*$
and, hence, have the same spectra. Furthermore, 
the principal symbol of $D^\pm_0-D^\pm_\lambda$ is $\frac{\lambda}{2 |\xi|}$ for any $\lambda\in\rho(-\Delta^D_\pm)$, 
see \cite[Lemma 1.1]{LV}. As the principal symbol of $\Lambda$ is $|\xi|$, it follows that, for any $z\in \rho(A)$, the operators
$(D^-_0-D^-_z)\Lambda$ and $(\Tilde D^+_0-\Tilde D^+_{-\frac{z}{\mu}})\Lambda$ are bounded in $H^{\frac12}(\Sigma)$
being pseudodifferential operators of order zero, and their principal symbols are $\frac{z}{2}$ and $\big(-\frac{z}{2\mu}\big)$ respectively.
By Eq.~\eqref{agm}  it follows that the principal symbol of $\Theta-U M^\Pi_zU^*$ is simply $\frac{z}{2}$,
and one can represent  $\Theta-U M^\Pi_zU^*=\frac{z}{2} +K_z$,  where
$K_z$ are compact operators depending holomorphically on $z\in \rho(A)$.
As the operator $A$ has compact resolvent, it follows by \eqref{gam1} that
the only possible singularities of $z\mapsto K_z$ at the points of $\sigma(A)$
are simple poles with finite-dimensional residues.
Therefore, the operator function  $z\mapsto U^*\Theta U-M^\Pi_z:=U^*(\Theta-U M^\Pi_zU^*)U$ 
satisfies the assumptions of the meromorphic Fredholm alternative on $\CC_0:=\CC\setminus\{0\}$,
see \cite[Theorem XIII.13]{RS4}, and either
(a)  $0\in\sigma(U^*\Theta U-M^\Pi_z)$  for all $z\in \CC_0\setminus\rho(A)$,
or (b) there exists a subset $B\subset \CC_0$, without accumulation points
in $\CC_0$, such that the inverse  $(U^*\Theta U-M^\Pi_z)^{-1}$  exists and is bounded for
$z\in \CC_0\setminus \big(B \mathop{\cup}\sigma(A)\big)$ and extends to a meromorphic function
in $\CC_0\setminus B$ such that the coefficients in the Laurent series
at the points of $B$ are finite-dimensional operators.
The case (a) can be excluded: By Corollary~\ref{corol5}
this would imply the presence of a non-empty non-real spectrum
for $\cL_{1}$, which is not possible due to the self-adjointness.
Therefore, we are in the case (b), and the resolvent formula \eqref{krein2}
 for $\cL_1\equiv A_{\Pi,U^*\Theta U}$ 
implies that the set $\CC_0\cap\sigma(\cL_{1})\cap \rho(A)\subseteq B$
has no accumulation points in $\CC_0$,
and each point of this set is a discrete eigenvalue of $\cL_{1}$.
Furthermore, by \eqref{gam1} the maps $z\mapsto G^\Pi_z$
can have at most simple poles with finite-dimensional residues
at the points of $\sigma(A)$, and it is seen again from the resolvent formula~\eqref{krein2}
that the only possible singularities of $z\mapsto (\cL_{1}-z)^{-1}$ at the points of $\sigma(A)$
are poles with finite-dimensional residues. It follows that each point of $\sigma(A)$
is either not in the spectrum of $\cL_{1}$ or is its discrete eigenvalue. Therefore, $\cL_1$ has no essential spectrum in $\CC\setminus\{0\}$,
and the only possible accumulation points for the discrete  eigenvalues are $0$ and $\infty$.
\end{proof}

\begin{proof}[\bf Proof of Theorem~\ref{thm3}]
Assume $n\ge 3$ and $\mu=1$, then again $\Psi_1=\frac{1}{2}\Lambda^{{\frac12}}(D^-_0-\Tilde D^+_0)\Lambda^{{\frac12}}$.
By \cite[Chapter 12, Proposition C.1]{Taylor}, there holds
\[
D^-_0=(-\Delta_\Sigma)^{\frac12}+B^- +C^-,
\quad
\Tilde D^+_0=(-\Delta_\Sigma)^{\frac12}+B^+ +C^+,
\]
where $C^\pm$ are pseudodifferential operators of order $(-1)$
and $B^\pm $ are pseudodifferential operator of order $0$
whose principal symbols are $\pm b_0(x,\xi)$, with 
\[
b_0(x,\xi)=\dfrac{1}{2}\left( \tr W_x - \dfrac{\langle \xi,W_{x}^*\xi\rangle_{T^*_x \Sigma}}{\langle \xi,\xi\rangle_{T^*_x \Sigma}}\right)
\]
and $W_x:=\mathrm{d}N_-(x):T_x\Sigma\to T_x\Sigma$ being the Weingarten map
and $W^*_x$ its adjoint. Therefore, $\Psi_1$ is a pseudodifferential
operator of order $1$ whose principal symbol is $\frac{1}{2}b_0(x,\xi)|\xi|\,$. 
As already seen, $\Psi^{\min}_{1}$ is then essentially self-adjoint by Lemma \ref{ess},
and, as before, $L_{1}$ is essentially self-adjoint and its self-adjoint closure is  $A_{\Pi,U^*\Theta U}$, 
where $\Theta:=\overline\Theta_{1}$. As $\Theta_1$ is a first order operator, one has
$H^{\frac 32}(\Sigma)\subseteq\dom \Theta$. In particular, the boundary condition
$\Gamma_1 u\in U^* H^{\frac 32}(\Sigma)$ 
entails $\gamma^-_0 u_-=\gamma^+_0u_+\in H^{{\frac12}}(\Sigma)$
and $\gamma^\partial_0u_+=0$. Due to the elliptic regularity,
see e.g. \cite[Chapter 2, Section 7.3]{LM}, this can be rewritten as $u\in H^1_0(\Omega)$
and gives the inclusion \eqref{dom3}. 

(a) Recall that the principal curvatures $k_1(x),\dots,k_{n-1}(x)$ of $\Sigma$ at a point $x$
are the eigenvalues of $W_x$, hence,
\[
\dfrac{1}{2}\Big(
k_1(x)+\dots+k_{n-1}(x)-\max_j k_j(x)
\Big)
\le b_0(x,\xi)
\le
\dfrac{1}{2}\Big(k_1(x)+\dots+k_{n-1}(x)-\min_j k_j(x)\Big).
\]
Let $\Sigma'$ be a maximal connected component of $\Sigma$. If all $k_j$ are either all strictly positive
or all strictly negative on $\Sigma'$, one can estimate $a_1\le \big|b_0(x,\xi)\big|\le a_2$ for all $x\in\Sigma'$
with some $a_1>0$ and $a_2>0$. Therefore, in this case $\Psi_1$ is a first order elliptic pseudodifferential operator and so,
by the results recalled at the beginning of this section, $\Psi^{0}_{1}$ is self-adjoint. This implies that $\dom \Theta\equiv \dom \overline{\Theta_1}=H^{\frac 32}(\Sigma)$.
As before, the boundary condition  $\Gamma_1u\in U^*\dom \Theta$  for $u\in\dom \cL_1$
entails $u\in H^1_0(\Omega)$, and one arrives at the equality \eqref{dom4}. The inclusion $\dom \cL_1\subseteq H^1_0(\Omega)$
and the compact embedding of $H^1_0(\Omega)$ into $L^2(\Omega)$ imply that $\cL_1$ has compact resolvent.

(b)  As $M^\Pi_0=0$, by Corollary~\ref{corol5}(b) and by the unitarity of $U$, to get $0\in\sigma_{\ess}(\cL_{1})$ it suffices to show that $0\in\sigma_\ess(\Theta)$.
As $\Lambda^{\frac 12}:H^{\frac 12}(\Sigma)\to L^2(\Sigma)$
is a unitary operator, it is sufficient to show
$0\in\sigma_\ess(\overline{\Phi_1})$ for the unitarily equivalent
operator $\overline{\Phi_1}\equiv \Lambda^{\frac 12}\Theta \Lambda^{-\frac 12}$ in $L^2(\Sigma)$,
which will be done by constructing a singular Weyl sequence, i.e. a sequence of non-zero functions
$(u_j)\subset \dom \overline{\Phi_1}$ weakly converging to $0$ in $L^2(\Sigma)$
and such that the ratio $\|\Psi_1 u_j\|_{L^2(\Sigma)}/\|u_j\|_{L^2(\Sigma)}$ tends to $0$.
While the domain of $\overline{\Phi_1}$ is not known explicitly, we know already that
it contains $H^1(\Sigma)$.

Without loss of generality we assume that $\Sigma_\varepsilon:=\big\{(x',0): x'\in B_\varepsilon\big\}\subset \Sigma$,
where $B_\varepsilon$ is the ball in $\RR^{n-1}$ centered at the origin and of radius $\varepsilon>0$.
The iterative procedure of \cite[Section~1]{uhl} shows that the full symbols of $D^0_-$ and $\Tilde D^+_0$ on $\Sigma_\varepsilon$ in the local coordinates $x'$
are equal to $|\xi|$, and it follows that the full symbol of $\Psi_1$ vanishes on $\Sigma_\varepsilon$. Hence, there exists a smoothing
operator $K$ and $\delta\in(0,\varepsilon)$ such that
$\Psi_1 \Tilde u=K \Tilde u$ for all $u\in C^\infty_c(B_\delta)$,
where $\Tilde u$ is the extension of $u$ by zero to the whole of $\Sigma$.
Take an orthonormal sequence $(u_j)\subset L^2(B_\delta)$ with $u_j\in C^\infty_c(B_\delta)$,
then the sequence $(\Tilde u_j)\subset H^1(\Sigma)$ is orthonormal
and weakly converging to $0$ in $L^2(\Sigma)$. Due to the compactness of $K$ in $L^2(\Sigma)$ there exists a subsequence $(\Psi_1 \Tilde u_{j_k})$
strongly converging to zero in $L^2(\Sigma)$. Therefore, the sequence $v_k:=\Tilde u_{j_k}$
is a sought singular Weyl sequence for $\overline{\Phi_1}$, which gives the result.

Suppose now $\dom \cL_{1}=\cD^{s}_{1}(\Omega\backslash\Sigma)\subseteq H^{s}(\Omega_{-})\oplus H^{s}(\Omega_{+})$ for some $s>0$.
As the set on the right-hand side is compactly embedded in $L^2(\Omega)$, see e.g.~\cite[Theorem 14.3.1]{Agra},
this implies the compactness of the resolvent of $\cL_1$ and the equality $\sigma_\ess(\cL_1)=\emptyset$,
which contradicts the previously proved relation $0\in\sigma_\ess(\cL_1)$.
\end{proof}

\begin{remark}\label{rem10} 
After some simple cancellations, the resolvent formula of Corollary \ref{corol5} for $\cL_\mu$
takes the following form:
\[
(\cL_\mu-z)^{-1}\begin{pmatrix}u_-\\u_+\end{pmatrix}=
\begin{pmatrix}
(-\Delta^D_- -z)^{-1}u_-\\
(\mu \Delta^D_+-z)^{-1}u_+
\end{pmatrix}
-
\begin{pmatrix}
R^-_z(u_-,u_+)\\
R^+_z(u_-,u_+)
\end{pmatrix},
\]
with
\begin{align*}
R^-_z(u_-,u_+)&=P^-_z\Big( D^-_z-\mu\Tilde D^+_{-\frac{z}{\mu}}\Big)^{-1} \big(\gamma_1^-(-\Delta^D_- -z)^{-1}u_- - \mu \gamma_1^+(\mu\Delta^D_+ - z)^{-1}u_+\big),\\
R^+_z(u_-,u_+)&=P^+_{-\frac{z}{\mu}}\begin{pmatrix}
\Big( D^-_z-\mu\Tilde D^+_{-\frac{z}{\mu}}\Big)^{-1} \big(\gamma_1^-(-\Delta^D_- -z)^{-1}u_- - \mu \gamma_1^+(\mu\Delta^D_+ - z)^{-1}u_+\big)
\\
0
\end{pmatrix}.
\end{align*}

\end{remark}

\section{Proof of Proposition~\ref{thm4}}\label{sec5}

We continue using the conventions and notation introduced just before Theorem~\ref{thm4}.
In addition to \eqref{omb} we have
\[
\Omega_+=B_{r_i} \mathop{\cup} B_{r_e,R},
\quad
\Sigma=S_{r_i}\mathop{\cup}  S_{r_e},
\quad
\partial\Omega=S_R,
\]
and for the subsequent computations we use the identification
$L^2(S_\rho)\simeq L^2\big((0,2\pi), \rho \,d\theta\big)$, 
then $L^2(\Sigma)\simeq L^2\big((0,2\pi),r_i d\theta\big)\osum L^2\big((0,2\pi),r_e d\theta\big)$,
and similar identifications hold for the Sobolev spaces.

In view of Corollary~\ref{corol5} and of the expressions \eqref{agm},
the injectivity of $\cL_1$ is equivalent to the injectivity of the map
\[
\cD:=D^-_0-\Tilde D^+_0:H^{-\frac{1}{2}}(\Sigma)\to H^{\frac 12}(\Sigma),
\]
and then the condition $g=(0,g_+)\in \ran\cL_1$ is equivalent to
$(G^\Pi_0)^* g\equiv -\gamma^+_1 (-\Delta^D_+)^{-1}g_+\in \ran \cD$,
or, as $(\Delta^D_+)^{-1}:L^2(\Omega_+)\to H^2(\Omega)$
and $\gamma^+_1:H^2(\Omega)\to H^{\frac 12}(\Sigma)$, to 
\begin{equation}
   \label{eqd}
\cD^{-1}\gamma^+_1 (-\Delta^D_+)^{-1}g_+\in H^{-\frac 12}(\Sigma).
\end{equation}
The condition will be checked using an explicit computation
of the Dirichlet-to-Neumann maps $D^\pm_0$ and of the inverse of $\Delta^D_+$.
 
For a function $f$ defined in $\Omega_\pm$,  define its Fourier coefficients with respect to the polar angle
by
\[
f_m(r):=\dfrac{1}{2\pi}\int_0^{2\pi} f(r\cos\theta,r\sin\theta) e^{-\rmi m\theta} d\theta,
\quad
m\in\ZZ,
\]
then $f$ is reconstructed by $f(r\cos\theta,r\sin\theta)=\sum_{m\in\ZZ} f_m(r) e^{\rmi m\theta}$.
Furthermore, the separation of variables shows that
a function $f$ is harmonic iff $f_m$ satisfy the Euler equations
\[
f''_m(r)+r^{-1} f'_m(r)-m^2r^{-2}f_m(r)=0,
\]
whose linearly independent solutions are $1$ and $\ln r$ for $m=0$
and $r^{\pm m}$ for $m\ne 0$. This shows that for
\[
(\phi_i, \phi_e)\in H^s(\Sigma),
\quad
\phi_{i/e}(\theta)=\sum_{m\in\ZZ} \phi_{i/e,m} e^{\rmi m \theta},
\quad
\phi_{i/e,m}:=\dfrac{1}{2\pi}\int_0^{2\pi} \phi_{i/e}(\theta)e^{-\rmi m\theta}d\theta,
\]
one has the following expressions for the Poisson operators:
\begin{multline}
  \label{p1}
P^-_0 \begin{pmatrix}\phi_i\\ \phi_e\end{pmatrix}(r\cos\theta,r\sin\theta)=
\dfrac{\ln\dfrac{r_e}{r}}{\ln\dfrac{r_e}{r_i}}\,\phi_{i,0}
+
\dfrac{\ln\dfrac{r}{r_i}}{\ln\dfrac{r_e}{r_i}}\,\phi_{e,0}\\
+
\sum_{m\in \ZZ\setminus\{0\}}
\dfrac{
\bigg[
\Big(\dfrac{r_e}{r}\Big)^{|m|}
-
\Big(\dfrac{r}{r_e}\Big)^{|m|}
\bigg]\phi_{i,m}
+
\bigg[
\Big(\dfrac{r}{r_i}\Big)^{|m|}
-
\Big(\dfrac{r_i}{r}\Big)^{|m|}
\bigg]\phi_{e,m}
}{
\Big(\dfrac{r_e}{r_i}\Big)^{|m|}
-
\Big(\dfrac{r_i}{r_e}\Big)^{|m|}
}\, e^{\rmi m\theta},\\
(r,\theta)\in (r_i,r_e)\times(0,2\pi),
\end{multline}
and
\begin{multline}
       \label{p2}
\Tilde P^+_0 \begin{pmatrix}\phi_i\\ \phi_e\end{pmatrix}(r\cos\theta,r\sin\theta)\\
=\begin{cases}
\sum_{m\in \ZZ} \Big(\dfrac{r}{r_i}\Big)^{|m|} \phi_{i,m} e^{\rmi m\theta}, & (r,\theta)\in (0,r_i)\times(0,2\pi),\\
\dfrac{\ln\dfrac{R}{r}}{\ln\dfrac{R}{r_e}}\,\phi_{e,0}
+
\sum_{m\in \ZZ\setminus\{0\}}
\dfrac{\Big(\dfrac{R}{r}\Big)^{|m|}
-
\Big(\dfrac{r}{R}\Big)^{|m|}}{
\Big(\dfrac{R}{r_e}\Big)^{|m|}
-
\Big(\dfrac{r_e}{R}\Big)^{|m|}
}\,\phi_{e,m} e^{\rmi m\theta},
& (r,\theta)\in (r_e,R)\times(0,2\pi).
\end{cases}
\end{multline}
It follows that 
\begin{gather*}
D^-_0 \begin{pmatrix}\phi_i\\\phi_e\end{pmatrix}=\sum_{m\in\ZZ} B_m \begin{pmatrix}\phi_{i,m}\\\phi_{e,m}\end{pmatrix} e^{\rmi m \theta},
\quad
\Tilde D^+_0  \begin{pmatrix}\phi_i\\\phi_e\end{pmatrix}=\sum_{m\in\ZZ}  C_m \begin{pmatrix}\phi_{i,m}\\\phi_{e,m}\end{pmatrix}
e^{\rmi m \theta},\\
\cD \begin{pmatrix}
\phi_i\\
\phi_e
\end{pmatrix}
=\sum_{m\in\ZZ} D_m
\begin{pmatrix}
\phi_{i,m}\\
\phi_{e,m}
\end{pmatrix}
 e^{\rmi m \theta},
\quad
D_m:=B_m-C_m,
\end{gather*}
with
\begin{align*}
B_0&=\begin{pmatrix}
\dfrac{1}{r_i} \dfrac{1}{\ln\dfrac{r_e}{r_i}}
&
-\dfrac{1}{r_i} \dfrac{1}{\ln\dfrac{r_e}{r_i}}
\\[\medskipamount]
-\dfrac{1}{r_e} \dfrac{1}{\ln\dfrac{r_e}{r_i}}
&
\dfrac{1}{r_e} \dfrac{1}{\ln\dfrac{r_e}{r_i}}
\end{pmatrix},
\quad
C_0=\begin{pmatrix}
0 & 0\\[\medskipamount]
0 & \dfrac{1}{r_e}\, \dfrac{1}{\ln \dfrac{R}{r_e}}
\end{pmatrix},\\
B_m&= |m|\begin{pmatrix}
\dfrac{1}{r_i} \,\dfrac{\Big(\dfrac{r_e}{r_i}\Big)^{|m|}+\Big(\dfrac{r_i}{r_e}\Big)^{|m|}}{\Big(\dfrac{r_e}{r_i}\Big)^{|m|}-\Big(\dfrac{r_i}{r_e}\Big)^{|m|}}
&
-\dfrac{2}{r_i} \, \dfrac{1}{\Big(\dfrac{r_e}{r_i}\Big)^{|m|}-\Big(\dfrac{r_i}{r_e}\Big)^{|m|}}
\\[\medskipamount]
-\dfrac{2}{r_e} \, \dfrac{1}{\Big(\dfrac{r_e}{r_i}\Big)^{|m|}-\Big(\dfrac{r_i}{r_e}\Big)^{|m|}}
&
\dfrac{1}{r_e} \,\dfrac{\Big(\dfrac{r_e}{r_i}\Big)^{|m|}+\Big(\dfrac{r_i}{r_e}\Big)^{|m|}}{\Big(\dfrac{r_e}{r_i}\Big)^{|m|}-\Big(\dfrac{r_i}{r_e}\Big)^{|m|}}
\end{pmatrix}, \quad m\ne 0,\\
C_m&=|m|\begin{pmatrix}
\dfrac{1}{r_i} & 0\\
0 & 
\dfrac{1}{r_e}
\dfrac{\Big(\dfrac{R}{r_e}\Big)^{|m|}+\Big(\dfrac{r_e}{R}\Big)^{|m|}}{\Big(\dfrac{R}{r_e}\Big)^{|m|}
-\Big(\dfrac{r_e}{R}\Big)^{|m|}
}
\end{pmatrix},
\quad m\ne 0.
\end{align*}
Therefore,
\begin{align*}
D_0&=\begin{pmatrix}
\dfrac{1}{r_i} \dfrac{1}{\ln\dfrac{r_e}{r_i}}
&
-\dfrac{1}{r_i} \dfrac{1}{\ln\dfrac{r_e}{r_i}}
\\
-\dfrac{1}{r_e} \dfrac{1}{\ln\dfrac{r_e}{r_i}}
&
\dfrac{1}{r_e} \left(\dfrac{1}{\ln\dfrac{r_e}{r_i}}-\dfrac{1}{\ln \dfrac{R}{r_e}}\right)
\end{pmatrix},
\\
D_m&=2|m|\begin{pmatrix}
\dfrac{1}{r_i} \,\dfrac{1}{\Big(\dfrac{r_e}{r_i}\Big)^{2|m|}-1}
&
-\dfrac{1}{r_i} \,\dfrac{\Big(\dfrac{r_e}{r_i}\Big)^{|m|}}{\Big(\dfrac{r_e}{r_i}\Big)^{2|m|}-1}\\[\medskipamount]
-\dfrac{1}{r_e} \,\dfrac{\Big(\dfrac{r_e}{r_i}\Big)^{|m|}}{\Big(\dfrac{r_e}{r_i}\Big)^{2|m|}-1}
&
\dfrac{1}{r_e} \left(
\dfrac{1}{\Big(\dfrac{r_e}{r_i}\Big)^{2|m|}-1}-\dfrac{1}{\Big(\dfrac{R}{r_e}\Big)^{2|m|}-1}
\right)
\end{pmatrix},
\quad m\ne 0,
\end{align*}
hence, all $D_m$ are invertible, and then $\cD$ is injective with the inverse
\begin{equation}
   \label{dinv}
\cD^{-1} \begin{pmatrix}
\phi_i\\
\phi_e
\end{pmatrix}
=\sum_{m\in\ZZ} D_m^{-1}
\begin{pmatrix}
\phi_{i,m}\\
\phi_{e,m}
\end{pmatrix}
 e^{\rmi m \theta},
\end{equation}
which shows the injectivity of $\cL_1$. Furthermore, for $m\ne 0$ we have
\[
D_m^{-1}=-\dfrac{1}{2|m|}\begin{pmatrix}
r_i \bigg( 1-\Big( \dfrac{r_e^2}{r_i R}\Big)^{2|m|}\bigg) &
r_e  \Big(\dfrac{r_e}{r_i}\Big)^{|m|}\bigg( 1-\Big( \dfrac{r_e}{R}\Big)^{2|m|}\bigg)\\[\bigskipamount]
r_i  \Big(\dfrac{r_e}{r_i}\Big)^{|m|}\bigg( 1-\Big( \dfrac{r_e}{R}\Big)^{2|m|}\bigg)
&
r_e \bigg( 1-\Big( \dfrac{r_e}{R}\Big)^{2|m|}\bigg)
\end{pmatrix},
\]
and we conclude that a function $(\phi_i,\phi_e)\in H^{\frac 12}(\Sigma)$
belongs to $\ran\cD$ iff $\cD^{-1}(\phi_i,\phi_e)\in H^{-\frac 12}(\Sigma)$, i.e. iff
\begin{equation}
   \label{sum4}
\sum_{m\ne 0} \dfrac{1}{|m|} \left\| D_m^{-1} \begin{pmatrix}
\phi_{i,m}\\ \phi_{e,m}
\end{pmatrix}
\right\|^2_{\CC^2}<\infty.
\end{equation}
Therefore, the condition \eqref{eqd} is equivalent to \eqref{sum4}
for
\begin{equation}
    \label{phi1}
(\phi_i,\phi_e):=\gamma^+_1 f, \quad f:=(\Delta^D_+)^{-1} g_+.
\end{equation}
Remark first that $f$ vanishes in $B_{r_i}$, hence, $\phi_i=0$ and $f_m(r)=0$ for $r<r_i$ and $m\in\ZZ$.
To study the problem in $B_{r_e,R}$, let us pass to the Fourier coefficients,
then we arrive to the system of equations
\begin{equation}
    \label{eul2}
f''_m(r)+r^{-1}f'_m(r)-m^2 r^{-2}f_m(r)= h_m 1_{(a,b)}(r),
\quad
r_e<r<R,
\quad
f_m(r_e)=f_m(R)=0,
\end{equation}
and we have
\begin{equation}
   \label{phi3}
\begin{pmatrix}
\phi_i\\
\phi_e
\end{pmatrix}
=
-
\sum_{m\in\ZZ}
\begin{pmatrix}
0\\
 f_m'(r_e)
\end{pmatrix}
e^{\rmi m\theta}.
\end{equation}
One solves \eqref{eul2} using the variation of constants,
and for $m\ne 0$ the solutions are
\begin{gather*}
f_m(r)=\alpha_m r^m+\beta_m r^{-m} + \dfrac{h_m r^m}{2m}\int_{r_e}^r s^{1-m} 1_{(a,b)}(s)ds
-\dfrac{h_m r^{-m}}{2m}\int_{r_e}^r s^{1+m} 1_{(a,b)}(s) ds,\\
\alpha_m=-\dfrac{h_m}{2m r_e^m} \dfrac{1}{\Big( \dfrac{R}{r_e}\Big)^m-\Big( \dfrac{r_e}{R}\Big)^m}
\int_a^b \bigg(\Big( \dfrac{R}{s}\Big)^m-\Big( \dfrac{s}{R}\Big)^m\bigg) s\,ds,
\quad
\beta_m=-r_e^{2m} \alpha_m,
\end{gather*}
and
\[
f'_m(r_e)=\dfrac{m h_m}{r_e}\big( \alpha_m r_e^m-\beta_m r_e^{-m}\big)=
-\dfrac{1}{r_e}\,\dfrac{\displaystyle\int_a^b \bigg(\Big( \dfrac{R}{s}\Big)^{|m|}-\Big( \dfrac{s}{R}\Big)^{|m|}\bigg) s\,ds}{\Big( \dfrac{R}{r_e}\Big)^{|m|}-\Big( \dfrac{r_e}{R}\Big)^{|m|}}.
\]
Then for large $m$ one has
\[
f'_m(r_e)=-\dfrac{\big(a^2+o(1)\big)h_m}{r_e|m|}\, \Big( \dfrac{r_e}{a}\Big)^{|m|},
\quad
D_m^{-1}\begin{pmatrix} 0\\ - f'_m(r_e) \end{pmatrix}
=h_m\begin{pmatrix}
\dfrac{a^2+o(1)}{2m^2} \Big(\dfrac{r_e^2}{ r_i a}\Big)^{|m|}\\
\dfrac{a^2+o(1)}{2m^2} \Big(\dfrac{r_e}{a}\Big)^{|m|}
\end{pmatrix},
\]
and the condition \eqref{sum4} for the function \eqref{phi1} takes the form \eqref{eq-hm},
which finishes the proof.

One should remark that the condition \eqref{eq-hm} can still hold for $a<r_e^2/r_i$
if the Fourier coefficients $h_m$ of $h$ are very fast decaying for large $m$, i.e.
if $h$ extends to an analytic function in a suitable complex neighborhood of the unit circle.

\begin{remark} At last we note that, in view of the injectivity of $\cL_1$, the expression for its inverse given
in Remark~~\ref{rem10} can be extended naturally to a linear map
$\cL^{-1}:L^2(\Omega)\to \cD'(\Omega)$. As $\ran(\Delta^D_\pm)^{-1}= H^2(\Omega_\pm)\mathop{\cap}H^1_0(\Omega_\pm)$,
the finiteness of the norms $\|\cL^{-1}_1g\|_{H^s(V)}$, $V\subseteq\Omega$, $0\le s\le 1$,
is equivalent to the finiteness of $\|v\|_{H^s(V)}$ for $v:=\big(R^-_0g,R^+_0g\big)$.
The direct substitution of the values of \eqref{dinv} and \eqref{phi3} into \eqref{p1} and \eqref{p2}
shows that one always has $v\in H^1(B_{r_e,R})$,
while the condition $v\in L^2(B_{r_i,r_e})$ appears to be equivalent to \eqref{eq-hm}, so
it holds for any $h$ for $a\ge r_e^2/r_i$ as before, otherwise
a very strong regularity of $h$ is required.
\end{remark}

\appendix

\section{Proof of Proposition~\ref{prop1}(d)}\label{app1}

Let $f\in \ran(A_{1,\Theta}-z)$, then there is $g\in\dom A_{1,\Theta}$
with $f=(A_{1,\Theta}-z)g$. By \cite[Theorem 1.23(a)]{BGP08}, one can uniquely represent
\begin{equation}
  \label{ggz}
g=g_z+G_z h
\end{equation}
with $g_z\in\dom A$ and $h\in\fh$, and $f=(S^*-z)g=(A-z)g_z$. As $g_z\in \dom A=\ker \Gamma_1$,
we have $\Gamma_1 g=\Gamma_1 G_z h=h$ and $\Gamma_2g=\Gamma_2 g_z+\Gamma_2 G_z h$.
By \cite[Theorem 1.23(2d)]{BGP08} there holds
$\Gamma_2 g_z=\Gamma_2 (A-z)^{-1} f=G^*_{\overline z}f$, and by definition
we have $\Gamma_2 G_z h=M_z h$. Therefore, the boundary condition $\Gamma_2 g=\Theta\Gamma_1 g$
writes as $G^*_{\overline z} f=(\Theta-M_z)h$ implying
$G^*_{\overline z}f\in \ran (\Theta-M_z)$.
If $\Theta-M_z$ is injective, then $A_{1,\Theta}$ is also injective by
Proposition~\ref{prop1}(c), $h=(\Theta-M_z)^{-1}G^*_{\overline z}$, and
the substitution into \eqref{ggz} gives the relation
\[
(A_{1,\Theta}-z)^{-1}f=g=g_z+G_z h=(A-z)^{-1}f+G_z(\Theta-M_z)^{-1}G^*_{\overline z}f.
\]

Now let $f\in \cH$ such that $G^*_{\overline z}f\in\ran(\Theta-M_z)$. Take
$h\in\fh$ with $G^*_{\overline z}f=(\Theta-M_z)h$ and consider the function
$g=g_z+G_z h$ with $g_z=(A-z)^{-1}f\in \dom A$.
By \cite[Theorem 1.23(2d)]{BGP08} we have $g\in \dom S^*$.
As previously, $\Gamma_1 g=h$
and $\Gamma_2 g=G^*_{\overline z}f+M_z h=(\Theta-M_z)h+M_z h=\Theta h=\Theta \Gamma_1 g$,
i.e. $g\in \dom A_{1,\Theta}$, and we have $(A_{1,\Theta}-z)g=(S^*-z)g=(S^*-z)(A-z)^{-1}f=f$,
i.e. $f\in\ran(A_{1,\Theta}-z)$.

\end{document}